\newtheorem{proposition}{Proposition}[section]
\newtheorem{theorem}{Theorem}[section]
\theoremstyle{definition}
\newtheorem{definition}{Definition}[section]
\newtheorem{remark}{Remark}[section]
\newtheorem{corollary}[definition]{Corollary}
\newcommand{\J}{\mathds{J}}
\newcommand{\R}{\mathds{R}}      %Numeros reales
\newcommand{\C}{\mathds{C}}      %Numeros complejos
\newcommand{\F}{\mathds{F}}
\newcommand{\I}{\mathds{I}}
\newcommand{\too}{\rightarrow}
\newcommand{\proa}{A^*G \mbox{$\;$}_{\tau^*} \kern-3pt\times_\alpha
G \mbox{$\;$}_\beta \kern-3pt\times_{\tau^*} A^*G}
\newcommand{\Ad}{\mbox{Ad}}
\newcommand{\ad}{\mbox{ad}}
\newcommand{\so}{\mathfrak{so}}
\newcommand{\al}{\mathfrak{g}}
\newcommand{\de}{\mathfrak{d}}
\newcommand{\Ese}{\mathcal{S}}
\newcommand{\De}{\mathcal{D}}
\begin{document}

\title[A case study of consistency for nonholonomic integrators]{The geometric discretisation of the Suslov problem: a case study of consistency for nonholonomic integrators}

\author[L.C. Garc\'ia-Naranjo]{Luis C. Garc\'ia-Naranjo}
\address{L.C. Garc\'ia-Naranjo: Departamento de Matem\'aticas y Mec\'anica, IIMAS-UNAM, Apdo. Postal: 20-726, Mexico City, 01000, Mexico.} \email{luis@mym.iimas.unam.mx}

\author[F. Jim\'enez]{Fernando Jim\'enez}
\address{F. Jim\'enez: Department of Applied Mathematics, University of Waterloo, 200 Univ. Avenue West, N2L 3G1,  Waterloo, Canada.} \email{fernando.jimenez.alburquerque@gmail.com}

\keywords{Nonholonomic mechanics, geo\-me\-tric integration, discrete mechanics, Lie groups and Lie algebras, reduction of mechanical systems with symmetry}
\subjclass[2010]{70F25,37M99,65P10.}

\maketitle

\begin{abstract}
Geometric integrators for nonholonomic systems were introduced by Cort\'es and Mart\'inez in \cite{CoMa} by
proposing a discrete Lagrange-D'Alembert principle. Their approach is based on the
 definition of a discrete Lagrangian $L_d$ and a discrete
constraint space $D_d$. There is no recipe to construct these objects and the performance of the integrator is sensitive to their choice.

Cort\'es and Mart\'inez  \cite{CoMa} claim that choosing $L_d$ and  $D_d$ in a consistent manner with respect to a finite difference map is necessary to
guarantee an approximation of the continuous flow within a desired order of accuracy. 
Although, this statement is given without proof, similar versions of it have appeared recently in the literature.

We evaluate the importance of the consistency condition by comparing the performance of two different geometric integrators for the nonholonomic Suslov problem,
only one of which corresponds to a consistent choice of $L_d$ and  $D_d$. We prove that both integrators produce approximations of the same order, 
and, moreover, that the non-consistent discretisation outperforms the other  in numerical  experiments and in terms of energy preservation. Our results indicate that 
 the consistency of a discretisation 
might  not be the most relevant feature to consider  in the construction
of nonholonomic geometric integrators.

\end{abstract}

\section{Introduction}

An extension of the theory of variational integrators \cite{Vese,Mose,MarsdenWest} to nonholonomic systems was proposed  by Cort\'es and Mart\'inez \cite{CoMa}
by introducing a discrete version of the Lagrange-D'Alembert principle (DLA). Their approach requires  the definition of two objects. The first of them
is the  {\em discrete Lagrangian} $L_d$ which is a real valued function on the Cartesian product $Q\times Q$, where $Q$ is the configuration space. 
The second object involved in their discretisation
is the {\em discrete constraint space} $D_d$ which is a submanifold of  $Q\times Q$ that
is the discrete counterpart of the non-integrable distribution $\mathcal{D}\subset TQ$ defined by the nonholonomic constraints. 

The discrete dynamics defined by the  DLA algorithm are sensitive to the choice of the discrete Lagrangian $L_d$ and the
discrete constraint space $D_d$. While there is no recipe to construct these objects, Cort\'es and Mart\'inez suggest that they should be constructed in a {\em consistent}
manner with respect to a {\em finite difference map} (rigorous definitions for these concepts are given in \S\ref{SS:DiffMaps}).
 More precisely, in Remark
3.1 of their article they state without proof: ``To guarantee that the DLA algorithm approximates the
continuous flow within a desired order of accuracy, one should select the discrete Lagrangian $L_d$ and the discrete constraint space $D_d$ in a consistent way''.
Similar statements appear in other sources like  \cite{FeZen}, \cite{FeChap}. The use of consistent discretisations is also suggested in \cite{McPer},\cite{JiSch}, \cite{JiSch2}.

The  goal of  this paper is to examine the importance of the consistency condition by considering in detail the performance of two different discretisations of the 
classical nonholonomic Suslov problem, only one of which is consistent. Recall that the Suslov problem \cite{Suslov} is a simple example of a nonholonomic system that can be realised
physically  and
that exhibits some of the main features that distinguish nonholonomic from Hamiltonian systems, like the non-existence of a smooth invariant measure, and the presence of 
attracting and repelling periodic orbits on the  energy level sets of the system. 

Our analysis of the performance of the two integrators focuses on the calculation of their local truncation errors, on a discussion of their energy-preservation properties, and on the
execution of  numerical experiments. 

Our results, explained in more detail below, indicate that the consistent discretisation does not perform better than the other 
in any of the aspects described above.  Our research suggests that consistency 
is not the most relevant property to consider in order to construct geometric integrators for nonholonomic systems with an enhanced behaviour.

\subsection{Two different geometric discretisations of the Suslov problem}

 The 
configuration space for the Suslov problem is the Lie group $Q=G=SO(3)$, and both the Lagrangian and the constraints are invariant with respect
to left multiplication on $G$, making the problem into a classical example of an {\em LL-system}. The reduced dynamics  takes place  on the reduced velocity phase space, which is the two-dimensional subspace $\de$ of the Lie algebra $\mathfrak{g}=\so(3)$ that defines the constraint
distribution at the group identity, and is     governed by 
 the {\em Euler-Poincar\'e-Suslov} equations.

Both of the discretisations that we consider fall into the scheme proposed by Fedorov and Zenkov \cite{FeZen} in which the discrete objects $L_d$ and $D_d$
are chosen to be  invariant with respect to the diagonal left multiplication of $G$ on $G\times G$. As a consequence, the discrete constraint
space $D_d\subset G\times G$ is determined by a {\em discrete displacement subvariety} $\Ese \subset G$ which is the 
discrete version of the reduced velocity phase  space $\de \subset \mathfrak{g}$. Similarly, the discrete Lagrangian $L_d$ is determined
by a reduced discrete Lagrangian $\ell_d:G\to \R$. The discrete dynamics on $D_d$ drop to $\Ese$
and define  {\em discrete Euler-Poincar\'e-Suslov}
equations.

In our analysis we define the discrete displacement subvariety $\Ese$ as the image of $\de$ under the 
{\em Cayley transform}. This definition of  $\Ese$ is reminiscent of the
 work in \cite{FeZen}, where $\Ese$ is  chosen as the image of $\de$ under the exponential map. In fact, as we explain in  
\S\ref{S:l1}, both approaches  to define $\Ese$ are  equivalent since the images of $\de$ under the Cayley and the exponential map
coincide  on an open dense subset of  $SO(3)$ that contains the identity.
 
The advantage to consider the Cayley transform over the exponential map is that the inverse transformation can be
explicitly written down as a rational map from $SO(3)$ to $\so(3)$. 
Moreover, in our work, we interpret this inverse map  as a (reduced) difference
map $\psi$ which we use to define a reduced discrete Lagrangian that we denote by $\ell^{(\infty,\varepsilon)}_d$ which can be explicitly computed (here and in what follows
$\varepsilon>0$ is the time step). The discretisation of the Suslov problem resulting from the choice of $\Ese$ and $\ell^{(\infty,\varepsilon)}_d$ is consistent with respect to $\psi$.

An alternative choice of the (reduced) discrete Lagrangian 
is the one considered by Moser and Veselov  in their celebrated discretisation of the free rigid body \cite{Mose}.
We denote this discrete Lagrangian by $\ell^{(1,\varepsilon)}_d$. The discretisation of the Suslov problem resulting from the choice of $\Ese$ and $\ell^{(1,\varepsilon)}_d$  is a reparametrisation of the one considered by Fedorov and Zenkov in \cite{FeZen} and, as we show in
Proposition \ref{P:NonConsistent}, it  is 
{\em not} consistent.

\subsection{Local truncation error of the approximation of the continuous flow} 
In order to compare the discrete and the continuous (reduced) flows on a common space, 
it is necessary to pass to the momentum phase space $\so(3)^*$. At the continuous level, this passage is defined by the inertia tensor
of the body  $\I$ which is interpreted as a linear isomorphism between $\so(3)$ and $\so(3)^*$. The image $\de^*:=\I(\de)$ is a two-dimensional
subspace of $\so(3)^*$ that is the reduced momentum phase space for the continuous Euler-Poincar\'e-Suslov equations.

At the discrete level  one defines the {\em discrete Legendre transformation} 
$\F\ell_d:G\to \so(3)^*$ as the right trivialisation of the derivative
of  $\ell_d$.
The discrete Euler-Poincar\'e-Suslov dynamics in momentum variables takes place in {\em the momentum locus} $\mathfrak{u}:=\F\ell_d(\Ese)$ which
is a nonlinear, possibly non-smooth,
subvariety of $\so(3)^*$.
Since the definition of the momentum locus involves the discrete reduced Lagrangian, the momentum loci $\mathfrak{u}_\varepsilon^{(1)}$
and $\mathfrak{u}_\varepsilon^{(\infty)}$, defined respectively by $\ell^{(1,\varepsilon)}_d$ and $\ell^{(\infty,\varepsilon)}_d$, are different.

The two discretisations of the Suslov problem introduced 
above, give rise to discrete evolution maps $\mathcal{B}^*_{(1,\varepsilon)}$ and $\mathcal{B}^*_{(\infty,\varepsilon)}$ defined
respectively on $\mathfrak{u}_\varepsilon^{(1)}$ and $\mathfrak{u}_\varepsilon^{(\infty)}$  that approximate the continuous 
Euler-Poincar\'e-Suslov equations on $\de^*$, provided that the  time step $\varepsilon$ is sufficiently 
small. We  obtain asymptotic expansions for $\mathcal{B}^*_{(1,\varepsilon)}$ and $\mathcal{B}^*_{(\infty,\varepsilon)}$ as
$\varepsilon\to 0$, that allow us to conclude that the local truncation error in both cases is of second order
(Theorems \ref{T:AccuracyMosVes} and \ref{T:AccuracyCayley}).

\subsection{Energy preservation} As reported in \cite{FeZen}, the non-consistent discretisation, whose 
reduced constrained Lagrangian is $\ell^{(1,\varepsilon)}_d$, defines 
a multi-valued  map that 
exactly preserves the energy
of the system. 

On the other hand, the consistent discretisation corresponding to $\ell^{(\infty,\varepsilon)}_d$ defines a multi-valued map that only preserves the energy of the system if a certain, non-generic, condition
on the inertia tensor of the body holds. 

\subsection{Numerical experiments}
In  \S\ref{Simulations} we present a series of work-precision diagrams that illustrate the convergence of both discrete systems to the continuous one
as the time step $\varepsilon \to 0$. The only advantage that results by working with the consistent discretisation 
is that
the resulting algorithm is well-defined for larger values of $\varepsilon$. However, for a sufficiently small time step, the non-consistent discretisation appears to give
a better approximation of the continuous flow.

\subsection{Organisation of the paper}

The paper is structured as follows: in \S\ref{Preli}  we quickly recall the necessary ingredients for our developments and we give the definitions of
finite difference maps and of consistency of a discretisation. We also consider equivariant finite difference maps for 
systems on Lie groups and we give a characterisation of consistency in this special case. 
 In  \S\ref{SusPro} we review the main aspects of the Suslov problem and we give a Taylor expansion of its solutions that is later used to compute the local truncation
 error of the discretisations.
 In \S\ref{S:Pade} we define the two different discretisations of the Suslov problem and show that only one of them is consistent. In \S\ref{S:req1}  (respecively, \S\ref{S:reqInfty}) we give working
 formulae for the discrete algorithm of the non-compatible (respectively  compatible) discretisation and show that the local truncation error in the approximation of the continuous flow is second order.
In  \S\ref{S:Energy} we discuss the preservation of energy for both discretisations and in  \S\ref{Simulations} we present some numerical results. The main conclusions of the paper  are 
given in \S\ref{S:Conclusions}. Finally, we present an appendix that shows that the momentum loci defined by both discretisations are contained in the zero level sets of  certain polynomials.

\section{Preliminaries}\label{Preli}

\subsection{Nonholonomic systems}
\label{S:LA}

A nonholonomic  system  on an $n$ dimensional configuration manifold $Q$ is determined by the triple $(Q,\De,L)$. Here $\De$  
is a non-integrable {\em constraint distribution} over $Q$ of constant rank  that at each $q \in Q$ defines an
$s$-dimensional subspace $\De_q\subset T_qQ$. A curve $q(t)$ on $Q$ satisfies the  nonholonomic constraints
if $\dot q(t)\in \De_{q(t)}$ at all time $t$.
The Lagrangian function  $L:TQ\too\R$  is of the form kinetic energy minus potential
energy, $L=T-V$, where $T$ defines 
a Riemannian metric on $Q$ and  $V:Q\to \R$.

The equations of motion are obtained via the Lagrange-D'Alembert principle of ideal constraints.
In local coordinates, it leads to\footnote{The sum convention over repeated indices is in use.}
\begin{subequations}\label{LdAeqs}
\begin{align}
\frac{d}{dt}\left ( \frac{\partial L}{\partial \dot q^{i}}\right ) -\frac{\partial L}{\partial q^{i}}&=\lambda_{\alpha}\,\mu^{\alpha}_{i}(q),\label{Con-1}
\qquad i=1, \dots, n, 
\\\label{Con-2}
\dot q(t)\in \De_{q(t)}. 
\end{align}
\end{subequations}
In the above equations 
$\mu^\alpha=\mu^\alpha_idq^i$ are any set of $n-s$  independent one-forms on $Q$ whose joint annihilator is $\De$,
and the scalars $\lambda_{\alpha},\hspace{1mm} \alpha=1,...,n-s$, that specify the reaction forces, are 
sometimes called Lagrange multipliers and are
uniquely determined by the constraints \eqref{Con-2}. Since the constraints are  ideal, 
 the energy $E=T+V$  is conserved along the flow. For more details we refer to \cite{Arn,Bl}.

\subsection{Discrete nonholonomic systems}\label{NI}

Discretisations of the Lagrange-d'A\-lem\-bert  prin\-ci\-ple for  nonholonomic systems have been introduced in \cite{CoMa,McPer} as an  extension of variational integrators (see \cite{MarsdenWest,Mose} and references therein for more details in this last topic). According to these references, the discretisation of the nonholonomic system
$(Q,\De,L)$ requires   the construction of  a {\em discrete Lagrangian} and a {\em discrete constraint space} $D_d\subset Q\times Q$. In the following definition we emphasise the role of the time step which is important for our purposes.

\begin{definition}\label{DiscNHSet} 
A discretisation of the  nonholonomic system $(Q,L,\De)$ is given by a time step $\varepsilon>0$ and a pair $(D_d,L^{(\varepsilon)}_d)$ where:
\begin{enumerate}
\item The discrete constraint space $D_d$ is a submanifold of $Q\times Q$ of dimension $2n-s$ with the additional property that
\[
I_d=\left \{ (q,q)\,|\,q\in Q\right \} \subset D_d,
\]
and, moreover, for all $q\in Q$
\begin{equation}
\label{E:ConsistDd}
T_{(q,q)}D_d=\{(v,w) \in T_qQ \times T_qQ \, | \, v-w\in \De_q\}.
\end{equation}
\item $L^{(\varepsilon)}_d:Q\times Q\too\R$ is the discrete Lagrangian.
\end{enumerate}
\end{definition}

\begin{remark}
\label{Rm:DiscLag}
 The discrete Lagrangian in the definition is taken as an approximation of the action functional over a time
interval of length $\varepsilon$, say $L^{(\varepsilon)}_d(q_k,q_{k+1})\simeq \int_{t_k}^{t_k+\varepsilon}L(q(t),\dot q(t))\, dt$, where the curve $q(t)$   satisfies $q(t_k)=q_k, \; q(t_{k}+\varepsilon)=q_{k+1}$, see e.g. \cite{MarsdenWest}.
The dependence of  $L^{(\varepsilon)}_d$ on $\varepsilon$ is not made explicit in    \cite{CoMa},  \cite{FeZen},  \cite{McPer}  and  \cite{FeChap}.
\end{remark}

\begin{remark}
 Condition \eqref{E:ConsistDd} is required so that the discrete constraint space 
$D_d$ is a consistent approximation of the constraint distribution $\De$.  It is   tacitly assumed in  \cite{FeZen} and
a particular case of it is mentioned in \cite{McPer}.
\end{remark}

The {\em discrete Lagrange-D'Alembert principle}  (DLA) defined by Cort\'es and Mart\'inez in \cite{CoMa}
 yields the set of  discrete nonholonomic equations
\begin{subequations}\label{DLA}
\begin{align}
D_1L_d^{(\varepsilon)}(q_k,q_{k+1})&+D_2L^{(\varepsilon)}_d(q_{k-1},q_{k})=\lambda^{(k)}_{\alpha}\,\mu^{\alpha}(q_k),\label{DLAa}\\
(q_k,q_{k+1})\in D_d
\label{DLAb}.
\end{align}
\end{subequations}
Here, just as in \eqref{LdAeqs}, the independent one-forms $\mu^\alpha$ are an arbitrary basis of the annihilator of $\De$.
At each step, the multipliers $\lambda^{(k)}_\alpha$ appearing in \eqref{DLAa} are determined by the condition \eqref{DLAb}.
Under certain technical conditions, the above is a well defined algorithm for the discrete approximation of the solutions of \eqref{LdAeqs}; see \cite{CoMa} for details.
For generalisations of  discrete nonholonomic systems, see \cite{Iglesias}.

\subsection{Finite difference maps and consistency}
\label{SS:DiffMaps}

The performance of a DLA nonholonomic integrator will depend on the choice of the pair $(D_d,L^{(\varepsilon)}_d)$. A possibility to construct them is to use  {\em finite difference  maps} \cite{McPer}.

\begin{definition}\label{D:fdm}\cite{McPer}
A  {\em finite difference  map} $\Psi$ is a diffeomorphism $\Psi : N_0(I_d)\to T_0Q$, where $N_0(I_d)$ is a neighbourhood
of the diagonal $I_d$ in $Q\times Q$ and $T_0Q$ denotes a neighbourhood of the zero section of $TQ$ which satisfies the
following
\begin{enumerate}
\item $\Psi (I_d)$ is the zero section of $TQ$;
\item $\tau_Q \circ \Psi (N_0(I_d))=Q$; and
\item along the diagonal $I_d$ we have
\begin{equation*}
\left . \tau_Q \circ \Psi \right |_{I_d} =\left .\pi_1 \right |_{I_d}=\left . \pi_2  \right |_{I_d},
\end{equation*}
\end{enumerate}
where $\tau_Q:TQ\to Q$ is the bundle projection and $\pi_1$, $\pi_2$ are the projections from $Q\times Q$ to $Q$.
\end{definition}

A simple example of a finite difference map with time step $\varepsilon$ if $Q=\R^n$ is given by
\begin{equation*}
\Psi(q_k,q_{k+1})=\left (q_k, \frac{q_{k+1}-q_k}{\varepsilon} \right ) \in T_{q_k}\R^n.
\end{equation*}

With a finite difference map at hand, one can define the discrete Lagrangian $L_d^{(\varepsilon)}$ by
\begin{equation}
\label{E:LdviaFiniteDifference}
L_d^{(\varepsilon)}=\varepsilon L\circ \Psi,
\end{equation}
that is the simplest quadrature approximation of the action  $\int_{t_k}^{t_k+\varepsilon}L(q(t),\dot q(t))\, dt$.

Following the discussion in \cite{CoMa}, we make the following definition.
\begin{definition}
\label{D:Consistency}
The discrete nonholonomic system defined by the pair $(D_d,L^{(\varepsilon)}_d)$ is {\em consistent with respect to the finite difference map $\Psi$}
if \eqref{E:LdviaFiniteDifference} holds and $\Psi (D_d)\subset \De$.
\end{definition}

In Remark 3.1  of \cite{CoMa} the authors claim that consistency is a necessary condition to guarantee approximation of the
continuous flow  to a desired
level of accuracy. The main contribution of this paper is to examine the validity of this statement by treating in detail a concrete example.
We will examine two different discretisations of the Suslov problem. Only one of them
 is consistent, but they both  lead to discrete algorithms having 
 local truncation errors of the same order. Our results indicate that the statement made in \cite{CoMa} is not entirely correct.

\subsection{Nonholonomic LL systems}\label{LLNI} If the 
configuration space is a Lie group $Q=G$, and both the constraint distribution $\De$ and the Lagrangian $L$ are invariant
under left multiplication on $G$, then we speak  of an {\em LL system}. In this case there is a subspace $\de$  of the Lie algebra $\al=T_eG$, where $e$ is the group identity, such that $\De_g=g\de$ for all $g\in G$. The non-integrability of the constraints means that $\de$ is not a subalgebra.

By left invariance, the Lagrangian $L:TG\too \R$ is of pure kinetic energy and is
determined by its value at the identity. We have $L(g,\dot g)=L(e,g^{-1}\dot g):=\ell(\xi)$, where $\xi =g^{-1}\dot g \in \al$. We call
 $\ell:\al \too \R$  the {\em reduced Lagrangian}. It is defined by the {\em inertia tensor} $\I:\al\too \al^*$
 that specifies  the left invariant kinetic energy metric at the identity. We will label such  LL system by the triple  $(G,\de, \ell)$. 

The reduction of the system by the left action of $G$ leads to the 
 {\em Euler-Poincar\'e-Suslov} equations \cite{FeKoz}
\begin{subequations}\label{E:EuPoSusMom}
\begin{align}
\label{E:EuPoSusMom1}
\dot M&=\ad_{\xi}^*\,M+\lambda_{\alpha}a^{\alpha}, \\
\label{E:EuPoSusMom2}
M &\in \de^*:=\I(\de).
\end{align}
\end{subequations}
Here $M=\I(\xi)$ and $a^\alpha\in \al^*$ are independent vectors whose joint annihilator is $\de$.
As before, the multipliers $\lambda_\alpha$ appearing in \eqref{E:EuPoSusMom1} are uniquely determined
by \eqref{E:EuPoSusMom2}. The above equations are consistent with the Lagrange-D'Alembert principle of ideal constraints and therefore
preserve the energy of the system.

\subsection{Discrete nonholonomic LL systems}\label{LLNII}

The discretisation of  LL systems  in accordance with the DLA algorithm was thoroughly considered by Fedorov and Zenkov
\cite{FeZen}. The authors proposed a discretisation scheme under the natural assumptions that both the 
 discrete Lagrangian $L_d^{(\varepsilon)}:G\times G\too\R$ and the discrete constraint space  $D_d\subset G\times G$ are invariant
 under the diagonal action of $G$ on $G\times G$ by left multiplication. We briefly recall some of their results.
 
 By invariance of $L_d^{(\varepsilon)}$, one can define a {\em reduced discrete Lagrangian} $\ell_d^{(\varepsilon)}:G\too \R$ by the rule 
 \begin{equation*}
L_d^{(\varepsilon)}(g_k,g_{k+1})=L_d^{(\varepsilon)}(e,g_{k}^{-1}g_{k+1}) =: \ell^{(\varepsilon)}_d(W_k)
\end{equation*}
where $W_k:=g_{k}^{-1}g_{k+1}\in G$ is the  {\it left incremental displacement}. One should  interpret $W_k\in G$ an approximation of $\exp{(\varepsilon\xi)}$, where $\xi =g^{-1}\dot g\in \al$. The relation
$g_{k+1}=g_kW_k$,  is   the discrete counterpart of the {\em reconstruction equation} $\dot g=g\xi.$ 
 
 Similarly, by left invariance of $D_d$ there exists a {\em discrete displacement subvariety} $\Ese \subset G$ determined
 by the condition
 \begin{equation*}
(g_k,g_{k+1})\in D_d \qquad \mbox{if and only if} \qquad W_k=g_{k}^{-1}g_{k+1}\in \Ese.
\end{equation*}
Given that $(g,g)\in D_d$ for all $g\in G$, it follows that the identity element $e\in \Ese$. Moreover,
one can easily show, using \eqref{E:ConsistDd}, that $T_e\Ese =\de$. 

Therefore,  under the above invariance conditions on $(D_d,L_d^{(\varepsilon)})$ the corresponding DLA nonholonomic integrator is completely determined by the  pair $(\Ese, \ell^{(\varepsilon)}_d)$. This motivates the following.
\begin{definition}\label{D:LL-disc} 
A left invariant discretisation of the  nonholonomic LL system $(G,\ell,\de)$ is given by a time step $\varepsilon>0$ and a pair $(\Ese,\ell^{(\varepsilon)}_d)$ where:
\begin{enumerate}
\item The discrete displacement subvariety $\Ese$ is a submanifold of $G$ that contains the identity and $T_e\Ese =\de$. In particular, $\dim(\Ese)=\dim (\de)$.
\item The discrete reduced Lagrangian $\ell^{(\varepsilon)}_d:G\too\R$. 
\end{enumerate}
\end{definition}
\begin{remark}
 In accordance with Remark \ref{Rm:DiscLag}, the reduced discrete Lagrangian is  an approximation of the action: 
 $\ell^{(\varepsilon)}_d(W_k)\simeq \int_{t_k}^{t_k+\varepsilon}\ell (g^{-1}(t)\dot g(t))\, dt$, where the curve $g(t)$ satisfies $g(t_k)=e, \; g(t_{k}+\varepsilon)=W_k$.
\end{remark}

Define the {\em discrete Legendre transformation} $ \F\ell_d^{(\varepsilon)}:G\to \al^*$
by the right trivialisation of the derivative of $\ell^{(\varepsilon)}_d$  \cite{BoSu}. For $W\in G$ 
and $\xi \in \al$ we have
\begin{equation}\label{DiscMomentum}
\langle \F\ell_d^{(\varepsilon)}(W),\xi\rangle =
\left . \frac{d}{ds}\right |_{s=0}\ell_d^{(\varepsilon)}(\mbox{exp}(s\xi)W).
\end{equation}

For $W_k\in \Ese$ we define the associated discrete momentum\footnote{This definition is consistent
with \cite{FeZen} but varies slightly from others, like \cite{BoSu}.}
\begin{equation*}
M_k:= \F\ell_d^{(\varepsilon)}(W_k).
\end{equation*}
One should understand $\F\ell_d$ as an approximation of the inertia tensor $\I:\al \to \al^*$. 
Note that numerous complications arise in the discrete setting. Firstly, $ \F\ell_d^{(\varepsilon)}$ is  locally invertible
but in general will fail to be globally invertible (see \cite{FeZen} and the discussion in  \S\ref{S:AsympExpSuslovMoserVes}). 
Secondly, $ \F\ell_d^{(\varepsilon)}$ is a nonlinear map, in fact its
domain is not even a linear space. 

The {\em discrete momentum locus} is defined in  \cite{FeZen} as
\begin{equation*}\label{Locus}
\mathfrak{u}_{\varepsilon}:=\F \ell_d^{(\varepsilon)}(\Ese)\subset \al^*.
\end{equation*}
The discrete momentum locus is a nonlinear subvariety of $\al^*$ that approximates the linear constraint space $\de^*:=\I(\de)\subset \al^*$.
Provided that $\ell_d^{(\varepsilon)}$ is a consistent approximation of the continuous reduced action, it will contain $0\in \al^*$ and will be tangent to $\de^*$.

 Fedorov and Zenkov \cite{FeZen} prove that the discrete nonholonomic equations \eqref{DLA} reduce to the   {\em discrete Euler-Poincar\'e-Suslov equations}   
\begin{subequations}\label{DEPSAlg}
\begin{align}\label{DEPSAlga}
&M_{k+1}=\Ad^*_{W_k}M_k + \lambda_{\alpha}^{(k)}\,a^{\alpha},\\ 
&M_{k+1}\in \mathfrak{u_\varepsilon},
\label{DEPSAlgb}
\end{align}
\end{subequations}
where $W_k\in \Ese$. 
As usual, the multipliers $\lambda_{\alpha}^{(k)}$ are determined by \eqref{DEPSAlgb}.
Since the discrete Legendre transform is only locally invertible, this scheme will generally
be multivalued and some care should be taken in the choice of the branch to adequately approximate the solutions
of \eqref{E:EuPoSusMom} (see the discussion in \cite{FeZen}).

\subsection{Left invariant finite difference maps}\label{SS:LeftInvDiffMaps}

Given that the discretisation of LL systems proposed by Fedorov, Zenkov in \cite{FeZen} considers a discrete Lagrangian
$L_d$ and a discrete constraint space $D_d$ that are invariant with respect to the diagonal action of $G$ on $G\times G$
by left multiplication, it is natural to consider the construction of these objects using  a finite difference map $\Psi$ that 
is equivariant.
Namely, one that satisfies
\begin{equation*}
\label{E:FiniteDiffinvariance1}
\Psi(hg_k,hg_{k+1})=Tl_h \Psi(g_k,g_{k+1}) \qquad \mbox{for all} \qquad h\in G,
\end{equation*}
where $l_h:G\too G$ is left multiplication by $h$.
If in addition to equivariance, the finite difference map $\Psi$ satisfies 
\begin{equation}
\label{E:FiniteDiffinvariance2}
\Psi(g_k,g_{k+1})\in T_{g_k}G, 
\end{equation}
for some $g_k$ (and hence all $g_k\in G$)
then $\Psi$ induces a {\em reduced difference map } defined by
\begin{equation}
\label{E:RedFiniteDiff}
\psi (W_k) =\Psi(e,g_k^{-1}g_{k+1}) \in \al,
\end{equation}
where, as before, $W_k=g_k^{-1}g_{k+1}$.
Note that $\psi$ is defined for $W_k$ on a neighbourhood of the of the identity $e\in G$ and maps into the 
Lie algebra. Moreover, we have $\psi (e)=0$.
We now state the following proposition whose proof is a direct consequence of the definitions.

\begin{proposition}
\label{P:Consistency} Let $(\Ese, \ell_d^{(\varepsilon)})$ be the invariant discretisation of the LL system $(G,\de,\ell)$ that arises as the
reduction of the invariant pair $(D_d,  L_d^{(\varepsilon)})$, where 
 $D_d\subset G\times G$ and $L_d^{(\varepsilon)}:G\times G\too \R$.
 Let $\Psi$ be an equivariant difference map that satisfies \eqref{E:FiniteDiffinvariance2}.
 Then, the discretisation of the unreduced LL system  $(G,\De,L)$
 defined by  $( D_d, L_d^{(\varepsilon)})$ is
  consistent with respect to  $\Psi$ if and only if the induced 
reduced difference map $\psi$ defined by \eqref{E:RedFiniteDiff} satisfies
\begin{equation}
\label{E:ReducedConsistency}
\ell_d^{(\varepsilon)}=\varepsilon \ell \circ \psi, \qquad \psi (\Ese)\subset \de.
\end{equation}
\end{proposition}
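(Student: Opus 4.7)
The plan is to unwind the three invariance assumptions (on $L$, on $L_d^{(\varepsilon)}$ and on $\Psi$) so that the two conditions of Definition \ref{D:Consistency} are translated term by term into the two conditions of \eqref{E:ReducedConsistency}. Since the proposition is an if-and-only-if, both directions follow from the same chain of identities, read forward or backward, so I would phrase the argument as a single sequence of equivalences.

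The first step is to use the equivariance of $\Psi$ together with \eqref{E:FiniteDiffinvariance2} to obtain a closed formula for $\Psi(g_k,g_{k+1})$ in terms of the reduced difference map. Taking $h=g_k^{-1}$ in the equivariance relation and using \eqref{E:RedFiniteDiff} gives
\begin{equation*}
\Psi(g_k,g_{k+1})=Tl_{g_k}\,\Psi(e,g_k^{-1}g_{k+1})=Tl_{g_k}\,\psi(W_k)\in T_{g_k}G,
\end{equation*}
where $W_k=g_k^{-1}g_{k+1}$. This is the crucial identity that pushes everything down to the Lie algebra.

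Next I would translate each of the two consistency conditions separately. For the Lagrangian condition, the left invariance of $L_d^{(\varepsilon)}$ gives $L_d^{(\varepsilon)}(g_k,g_{k+1})=\ell_d^{(\varepsilon)}(W_k)$, while the left invariance of $L$ combined with the previous display yields
\begin{equation*}
L(\Psi(g_k,g_{k+1}))=L\bigl(g_k,\,Tl_{g_k}\psi(W_k)\bigr)=\ell\bigl(Tl_{g_k^{-1}}Tl_{g_k}\psi(W_k)\bigr)=\ell(\psi(W_k)).
\end{equation*}
Hence $L_d^{(\varepsilon)}=\varepsilon L\circ\Psi$ holds on $N_0(I_d)$ if and only if $\ell_d^{(\varepsilon)}(W_k)=\varepsilon\,\ell(\psi(W_k))$ for every $W_k$ in a neighbourhood of $e$, which is the first half of \eqref{E:ReducedConsistency}. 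For the constraint condition, note that $(g_k,g_{k+1})\in D_d$ if and only if $W_k\in\Ese$, and that $\Psi(g_k,g_{k+1})=Tl_{g_k}\psi(W_k)\in \De_{g_k}=g_k\de$ if and only if $\psi(W_k)\in\de$, by left invariance of $\De$. Therefore $\Psi(D_d)\subset\De$ if and only if $\psi(\Ese)\subset\de$, which is the second half of \eqref{E:ReducedConsistency}.

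There is no genuine obstacle in this argument; the only point requiring mild care is to verify that the equivariance together with \eqref{E:FiniteDiffinvariance2} gives a well-defined $\psi$ and that the tangent-vector manipulations are legitimate on a whole neighbourhood of the identity (not just at $e$), so that the equivalences above hold on the full domain $N_0(I_d)$ rather than only on the diagonal. This is handled by observing that \eqref{E:FiniteDiffinvariance2} is preserved under the equivariance relation, so $\Psi(g_k,g_{k+1})\in T_{g_k}G$ for all $(g_k,g_{k+1})$ in $N_0(I_d)$ and $\psi$ is a diffeomorphism onto a neighbourhood of $0\in\al$ by the analogous properties of $\Psi$.
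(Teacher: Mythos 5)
Your proof is correct and is exactly the unwinding of the definitions that the paper has in mind: the paper gives no written proof, stating only that the proposition ``is a direct consequence of the definitions,'' and your chain of identities (equivariance giving $\Psi(g_k,g_{k+1})=Tl_{g_k}\psi(W_k)$, then left invariance of $L$, $L_d^{(\varepsilon)}$, $D_d$ and $\De$ translating each consistency condition) is precisely that direct consequence made explicit.
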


In practice, to construct an invariant consistent discretisation of an LL system one can start with a {\em retraction map},  that is a local diffeomorphism
 $\tau:\al\to G$, with the property that $\tau (0)=e$ and
$T_0\tau=\mbox{id}_\al$.  The reduced finite difference map $\psi$ can be taken as the local  inverse of $\tau$ (defined in a neighbourhood 
$N_e$ of $e\in G$), and the discrete displacement subvariety as $\Ese:=\tau(\de)\cap N_e$. Examples of retraction maps are the 
exponential  and  the Cayley maps.

\section{The Suslov problem}\label{SusPro}

The Suslov problem, first introduced in \cite{Suslov}, is a prototype example of a nonholonomic LL system in which the Lie group $G=SO(3)$.
Physically it models  the motion of a rigid body under  its own inertia subject to the nonholonomic
constraint that one of the components of the angular velocity as seen in the {\em body frame} vanishes. 
Here we recall a series of
known facts of the problem and we give a Taylor expression of its reduced solutions in order to later examine the performance
of geometric integrators for the system.

\subsection{Euler-Poincar\'e-Suslov equations}
Without loss of generality, we  assume that
the body frame has been chosen in such way that the nonholonomic  constraint is $\omega_3=0$,
where $\omega\in \R^3$ denotes the angular velocity vector written the body  frame.
As usual, we interpret $\omega$ as an element of the Lie algebra $\al=\R^3$ equipped with the vector product $\times$.
  We have  $\widehat \omega = g^{-1}\dot g$, where $g\in SO(3)$ is the attitude matrix of the body and $\widehat \omega$ is the  
skew-symmetric matrix that represents  $\omega\in \R^3$ via the  hat map, see \eqref{Isom} below.

The nonholonomic constraint defines a left-invariant non-integrable, rank 2 distribution $\De\subset TSO(3)$ determined at the identity by the linear subspace 
\begin{equation}
\label{E:deSuslov}
\de =\left \{ \omega \in  \R^3 \, | \, \omega_3=0 \right \}.
\end{equation}
 It is clear that  $\de$ is not a subalgebra of $(\R^3,\times)$.

The reduced   Lagrangian of the system is $\ell(\omega) = \frac{1}{2} \langle \I \omega, \omega\rangle$
where $\I$ is the inertia tensor of the body and  $\langle \cdot , \cdot \rangle$ is the euclidean product in $\R^3$. Rotating the body frame about its third axis if necessary, we can assume without loss of generality that 
\begin{equation*}\label{InertiaTensor}
\I = \left ( \begin{array}{ccc} I_{11} & 0 & I_{13} \\ 0 & I_{22} & I_{23} \\  I_{13} & I_{23} & I_{33} \end{array} \right ).
\end{equation*}
The Euler-Poincar\'e-Suslov equations \eqref{E:EuPoSusMom} for the angular momentum vector $M=\I\omega$ are 
 \begin{equation}
 \label{eq:EPS-M}
\dot M = M \times  ( \I^{-1}M)  +\lambda e_3.
\end{equation}
where $\lambda$ is a multiplier that is uniquely determined by the condition that $M\in \de^*:=\I(\de)$, and $e_3=(0,0,1)$.
 Explicitly we have
\begin{equation}
\label{E:defdestarSuslov}
\de^*=\left \{ M\in \R^3 \,  | \, M_3=\frac{ I_{13}I_{22}M_1+ I_{11}I_{23} M_2}{I_{11}I_{22}} \right \}.
\end{equation}
Therefore, the first two components of \eqref{eq:EPS-M} become
\begin{equation}\label{E:EulerPoincareDual}
\begin{split}
\dot M_1&=-\frac{M_2}{I_{11} I_{22}^2} \left ( I_{13}I_{22}M_1+ I_{11}I_{23} M_2\right ),\\
\dot M_2&=\,\,\,\,\,\frac{M_1}{I_{22} I_{11}^2} \left ( I_{13}I_{22}M_1+ I_{11}I_{23} M_2\right ).
\end{split}
\end{equation}

 The above equations
 preserve the restriction of the energy $\frac{1}{2}\langle \I^{-1}M,M \rangle$ to the constraint space $\de^*$. Explicitly, the first integral is
\begin{equation}
\label{E:EnergySuslov}
 E_c(M_1,M_2) =\frac{I_{22}M_1^2+I_{11}M_2^2}{2I_{11}I_{22}}.
\end{equation}
Using this first integral, equations \eqref{E:EulerPoincareDual} can be integrated explicitly in terms of hyperbolic functions.
\subsection{Taylor expansion of the solutions} In order to evaluate the order of local truncation error of the geometric integrators
for the Suslov problem to be defined ahead, we perform a Taylor expansion of the solutions 
of \eqref{E:EulerPoincareDual}. By repeated differentiation of these equations we find
\begin{equation}\label{E:ExpansionMcont}
\begin{split}
M_1&(t+\varepsilon)=M_1(t) + \varepsilon \mu_1(t) +  \varepsilon^2 \mu_2(t) +  \varepsilon^3 \mu^{(C)}_3(t) +\mathcal{O}(\varepsilon^4), \\
M_2&(t+\varepsilon)=M_2(t) + \varepsilon \nu_1(t) +  \varepsilon^2 \nu_2(t) +  \varepsilon^3 \nu^{(C)}_3(t) +\mathcal{O}(\varepsilon^4),
\end{split}
\end{equation}
with
\begin{equation}
\begin{split}
\label{E:ExpansionMAuxGeneral}
\mu_1&= - \frac{M_2(I_{13}I_{22}M_1 +I_{11}I_{23} M_2)}{I_{22}^2I_{11}} ,  \\ 
\mu_2 &= -\frac{\left ( I_{13}I_{22}M_1+I_{11}I_{23}M_2\right ) \left ( I_{13}I_{22}M_1^2+2I_{11}I_{23}M_1M_2-I_{11}I_{13}M_2^2\right )}{2I_{11}^3I_{22}^3}, \\
\nu_1&=\frac{M_1(I_{13}I_{22}M_1 +I_{11}I_{23} M_2)}{I_{22}I_{11}^2}, \\
\nu_2&=\frac{\left ( I_{13}I_{22}M_1+I_{11}I_{23}M_2\right ) \left ( I_{23}I_{22}M_1^2-2I_{22}I_{13}M_1M_2-I_{11}I_{23}M_2^2\right )}{2I_{11}^3I_{22}^3},
\end{split}
\end{equation}
and
\begin{equation}
\begin{split}
\label{E:ExpansionMAuxCont}
&\mu^{(C)}_3 = \frac{-1}{12I_{11}^4I_{22}^5}\left ( I_{13}I_{22}M_1+I_{11}I_{23}M_2\right ) \left(      -9I_{11}I_{13}I_{22}I_{23}M_1M_2^2 -2I_{11}^2I_{23}^2M_2^3  \right. \\  &\left. 
 \qquad \qquad  + 3I_{13}I_{22}^2I_{23}M_1^3  + I_{22}(4I_{11}I_{23}^2-5I_{13}^2I_{22}) M_1^2M_2+I_{11}I_{13}^2I_{22}M_2^3 \right), \\
&\nu^{(C)}_3= \frac{1}{12I_{11}^5I_{22}^4}\left ( I_{13}I_{22}M_1+I_{11}I_{23}M_2\right ) \left(  -9I_{11}I_{13}I_{22}I_{23}M_1^2M_2  -2I_{22}I_{13}^2M_1^3  \right. \\ &\left.  \qquad \qquad
 + 3I_{13}I_{11}^2I_{23}M_2^3  + I_{11}(4I_{22}I_{13}^2-5I_{23}^2I_{11}) M_1M_2^2+I_{22}I_{23}^2I_{11}M_1^3 \right).
\end{split}
\end{equation}

\subsection{Expressions in $\so(3)$}

For our treatment ahead it is useful to write the angular velocity $\omega$, the angular momentum $M$ and the reduced Lagrangian $\ell$, in terms of skew-symmetric $3\times 3$ matrices. 

Recall (see e.g. \cite{MaRa}) that the {\em hat map} is the Lie algebra isomorphism $\;\widehat{} :\R^3\to \so(3)$ defined by
\begin{equation}\label{Isom}
\widehat \omega=\left (
\begin{array}{ccc}
0&-\omega_3&\omega_2\\
\omega_3&0&-\omega_1\\
-\omega_2&\omega_1&0
\end{array}
\right ) \in\so(3).
\end{equation}
The hat map is also an isometry between $\R^3$ with the standard Euclidean metric  and  $\so(3)$  equipped 
with the scalar product 
\begin{equation}
\label{E:pairing}
\langle \xi , \eta \rangle = \frac{1}{2}\mbox{Trace}(\xi \eta^T), \qquad \xi, \eta \in \so(3). 
\end{equation}

A short calculation shows that 
\begin{equation*}
\widehat M = \widehat{\I \omega}= \mathbb{J}\widehat \omega+ \widehat \omega \mathbb{J},
\end{equation*}
where the symmetric matrix $\mathbb{J}$ is given by
\begin{equation*}
\J = \left ( \begin{array}{ccc} \frac{1}{2}( I_{22}+ I_{33}-I_{11}) & 0 & -I_{13} \\ 0 & \frac{1}{2}( I_{11}+ I_{33}-I_{22}) & - I_{23} \\ - I_{13} & - I_{23} &  \frac{1}{2}( I_{11}+ I_{22}-I_{33}) \end{array} \right ).
\end{equation*}
Using the properties of the trace and the skew-symmetry of $\widehat \omega$ we can write 
\begin{equation}
\label{E:ContLag}
\ell(\omega) = \frac{1}{2}\mbox{Trace}(\mathbb{J}\widehat  \omega \widehat \omega^T).
\end{equation}
In the following sections we will make an indistinctive treatment of vectors in $\R^3$ and skew-symmetric matrices in $\so(3)$.
There should be no risk of confusion and any kind ambiguities are resolved by the hat map.

\section{Two different discretisations of the Suslov problem}
\label{S:Pade}

We now define two different invariant discretisations of the Suslov problem that follow the prescription of Fedorov-Zenkov \cite{FeZen} described in 
 \S\ref{LLNII}. Recall that such discretisations are determined by a choice of discrete displacement subvariety $\Ese\subset SO(3)$ 
 and a reduced  discrete Lagrangian $\ell_d^{(\varepsilon)}:SO(3)\too \R^3$.

The choice of $\Ese$ for both discretisations is given in \S\ref{S:S}. The two different choices of the reduced discrete Lagrangian are respectively given 
in  \S\ref{S:l1} and  \S\ref{S:linfty}. We show that only the second discretisation is consistent in the sense of Definition \ref{D:Consistency}.
 
Our construction will use the {\em Cayley map} (see e.g. \cite{Iserles}).  For a skew-symmetric matrix $\omega \in \so(3)$ define\footnote{The factor of the time step $\varepsilon$ is included for consistency with physical units.}
\begin{equation}
\label{E:CayleyGeneral}
\mbox{Cay}_\varepsilon(\omega):=\left (e+\frac{\varepsilon\omega}{2} \right )\left (e-\frac{\varepsilon\omega}{2} \right )^{-1} \in SO(3).
\end{equation}
The Cayley map is injective and for 
 $W\in \mbox{range}(\mbox{Cay}_\varepsilon)\subset SO(3)$ , we
have
\begin{equation}
\label{E:CayInv}
\mbox{Cay}_\varepsilon^{-1}(W)=\frac{2}{\varepsilon}(W-e)(W+e)^{-1}.
\end{equation}
By Euler's theorem (see e.g. \cite{MaRa}) we know that, except for the identity, any matrix in $SO(3)$ is a rotation through an angle $\theta$ about a certain axis.
Such matrix will have eigenvalues $1, e^{i\theta}, e^{-i\theta}$.
On the other hand, from the above formula we see that the range of $\mbox{Cay}_\varepsilon$ consists of matrices in $SO(3)$ that do not have eigenvalue $-1$. 
Therefore, the range of $\mbox{Cay}_\varepsilon$ consists of those matrices whose angle of rotation $\theta$   lies strictly between $-\pi$ and $\pi$.

\subsection{The discrete  constraint displacement subvariety} \label{S:S} We define $\Ese\subset SO(3)$ by
\begin{equation}
\label{eq:DefS}
\mathcal{S}:=\mbox{Cay}_\varepsilon (\mathfrak{d}),
\end{equation}
where $\mathfrak{d}$ is given by \eqref{E:deSuslov}.  

Setting $\omega_3=0$ in \eqref{Isom}, and using \eqref{E:CayleyGeneral}, we obtain the following expression for the matrix $\mbox{Cay}_\varepsilon(\omega_1, \omega_2,0)$
\begin{equation}
\label{E:Cayd}
%\mbox{Cay}_\varepsilon(\omega_1, \omega_2,0)=
\frac{1}{4+\varepsilon^2(\omega_1^2+\omega_2^2)}\left ( \begin{array}{ccc}
4+\varepsilon^2(\omega_1^2-\omega_2^2) & 2\varepsilon^2\omega_2\omega_1 & 4\varepsilon\omega_2\\
2\varepsilon^2\omega_2\omega_1 & 4-\varepsilon^2(\omega_1^2-\omega_2^2) & -4\varepsilon\omega_1\\
 -4\varepsilon\omega_2 & 4\varepsilon\omega_1 & 4-\varepsilon^2(\omega_1^2+\omega_2^2)
\end{array}
\right ).
\end{equation}
Matrices of the above form with $(\omega_1, \omega_2)\in \R^2$  parametrise $\Ese$.
Note that the above matrix has
axis of rotation $(\omega_1,\omega_2,0)$. Therefore, the condition $W_k\in \Ese$ can be restated by saying  that  the incremental displacements
 $W_k=g_k^{-1}g_{k+1}$ have axis of rotation 
 perpendicular to $e_3$, and angle of rotation $-\pi< \theta < \pi$.

Using \eqref{E:Cayd} one can check that the condition $T_e\Ese =\de$ in Definition \ref{D:LL-disc} is satisfied.

\subsection{Definition of the non-consistent discretisation.}\label{S:l1}

As first considered by  Moser and Veselov in  \cite{Mose}, we define the reduced discrete Lagrangian $\ell^{(1,\varepsilon)}:SO(3)\too \R$  by
\begin{equation}
\label{E:DiscLagMosVes}
\ell^{(1,\varepsilon)}_d(W)=-\frac{1}{\varepsilon}\mbox{Trace}(\J W).
\end{equation}
Up to the addition of an irrelevant
constant, it is obtained by formally  putting  $\widehat \omega = \frac{1}{\varepsilon}(W-e)$ in \eqref{E:ContLag}.
The superscript $1$ in the notation indicates that $ \frac{1}{\varepsilon}(W-e)$  is the first term in the expansion of \eqref{E:CayInv} as a power series in $W-e$. 

\begin{proposition}
\label{P:NonConsistent}
The geometric discretisation of the Suslov problem defined by the constraint subvariety $\Ese$ given by \eqref{eq:DefS} and the
reduced discrete Lagrangian \eqref{E:DiscLagMosVes} is not consistent in the sense of Definition \ref{D:Consistency}. \end{proposition}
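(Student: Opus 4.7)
The plan is to argue by contradiction. Suppose the discretisation defined by $(\Ese, \ell_d^{(1,\varepsilon)})$ were consistent with respect to some finite difference map $\Psi$. By Definition \ref{D:Consistency} we would have $L_d^{(\varepsilon)} = \varepsilon L \circ \Psi$ on a neighbourhood $N_0(I_d)$ of the diagonal in $SO(3)\times SO(3)$, where $L_d^{(\varepsilon)}(g_k,g_{k+1}) = \ell_d^{(1,\varepsilon)}(g_k^{-1}g_{k+1})$ is the left-invariant lift.

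The key observation is that both sides of this identity have prescribed behaviour on the diagonal $I_d$. By condition (1) of Definition \ref{D:fdm}, $\Psi$ sends $I_d$ to the zero section of $TSO(3)$, so $\Psi(g,g) = 0_g \in T_g SO(3)$ for every $g$. Since the Suslov Lagrangian is of pure kinetic energy, $L(g, 0_g) = \ell(0) = 0$, whence the right-hand side satisfies $(\varepsilon L \circ \Psi)(g,g) = 0$. On the other hand, left-invariance gives
\begin{equation*}
L_d^{(\varepsilon)}(g,g) = \ell_d^{(1,\varepsilon)}(e) = -\frac{1}{\varepsilon}\mbox{Trace}(\J),
\end{equation*}
and this quantity is strictly negative, because the inertia tensor $\I$ is positive definite and $\mbox{Trace}(\J) = \frac{1}{2}\mbox{Trace}(\I) > 0$. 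The contradiction rules out the existence of any $\Psi$ with respect to which $(\Ese, \ell_d^{(1,\varepsilon)})$ is consistent.

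This elementary argument presents no genuine obstacle; the decisive point is simply that the Moser--Veselov reduced Lagrangian $\ell_d^{(1,\varepsilon)}$ is not normalised to vanish at the identity, whereas Definition \ref{D:fdm} forces any finite difference map to send the diagonal to the zero section where $L$ itself vanishes. One could of course restore vanishing at the identity by adding the constant $\frac{1}{\varepsilon}\mbox{Trace}(\J)$ to $\ell_d^{(1,\varepsilon)}$ without altering the discrete Euler--Poincar\'e--Suslov equations, but the discretisation as specified in \eqref{E:DiscLagMosVes} fails the strict functional identity demanded by Definition \ref{D:Consistency}. Equivalently, one may argue in the reduced setting via Proposition \ref{P:Consistency}: any reduced difference map $\psi$ must satisfy $\psi(e) = 0$, so consistency would require $\ell_d^{(1,\varepsilon)}(e) = \varepsilon \ell(\psi(e)) = 0$, which directly contradicts the computation above.
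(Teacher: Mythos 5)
Your argument is correct as a proof of the literal statement, but it takes a genuinely different route from the paper's, and the difference is worth spelling out. The paper first discards the additive constant --- writing $\ell_d^{(1,\varepsilon)}=\varepsilon\,\ell\circ\sigma_1$ up to the constant $\frac{1}{\varepsilon}\mbox{Trace}(\J)$, where $\sigma_1(W)=\frac{1}{\varepsilon}(W-e)$ --- and then locates the obstruction in the \emph{second} half of Definition \ref{D:Consistency}: the constraint condition $\psi(\Ese)\subset\de$ fails because $\sigma_1(W)$ is skew-symmetric only for $W=e$, so no open neighbourhood of the identity in $\Ese$ can be mapped into $\de$. You instead exploit exactly the constant the paper calls ``irrelevant'': any finite difference map sends the diagonal to the zero section, $L$ is purely kinetic, so consistency forces $\ell_d^{(\varepsilon)}(e)=0$, which $\ell_d^{(1,\varepsilon)}(e)=-\frac{1}{\varepsilon}\mbox{Trace}(\J)<0$ violates. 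This is valid, and in one respect it is tighter than the paper's proof: it rules out \emph{every} finite difference map $\Psi$ at once, whereas the paper implicitly treats $\sigma_1$ as the only candidate for the reduced difference map (tacitly excluding other $\psi$ with $\varepsilon\,\ell\circ\psi=\ell_d^{(1,\varepsilon)}$). But what your argument buys is correspondingly fragile: as you yourself observe, adding the constant $\frac{1}{\varepsilon}\mbox{Trace}(\J)$ --- which changes nothing in the discrete Euler--Poincar\'e--Suslov equations --- would defeat it entirely, while the paper's obstruction survives that normalisation. Since the authors' point is that this discretisation is essentially non-consistent rather than merely mis-normalised (this is what feeds the comparison with Theorems \ref{T:AccuracyMosVes} and \ref{T:AccuracyCayley}), you should at least supplement your proof with the paper's observation about $\sigma_1(\Ese)\not\subset\so(3)$, so that the conclusion does not hinge on a normalisation convention.
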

\begin{proof}
Up to an irrelevant constant term we have $\ell^{(1,\varepsilon)}_d=\varepsilon \ell \circ \sigma_1$ where  $\sigma_1$ is defined by 
\begin{equation*}
 \sigma_1(W)=\frac{1}{\varepsilon}(W-e).
\end{equation*}
In view of Proposition \ref{P:Consistency} and equation \eqref{E:ReducedConsistency}, the discretisation is consistent
if there exists an open submanifold $\Ese'$ of $\Ese = \mbox{Cay}_\varepsilon(\de)$ containing the identity, such that 
$\sigma_1(\Ese')\subset \de$.
But this is not possible since for $W\in \Ese$, $\sigma_1(W)\in \so(3)$ if only if $W=e$.
\end{proof}

In  \cite{FeZen} Fedorov and Zenkov considered a geometric discretisation of the Suslov problem having discrete reduced Lagrangian  given by
\eqref{E:DiscLagMosVes} and with constraint displacement subvariety $\Ese':=\exp(\de)$. Such choice of $\Ese'$ consists of all  matrices in $SO(3)$ having
axis of rotation  perpendicular to $e_3$. Therefore, $\Ese$ given by \eqref{eq:DefS} is an open and dense subset of $\Ese'$ that contains the identity. The complement $\Ese'\setminus \Ese$ consists of matrices in  $SO(3)$
 that are a rotation by $\pi$ about an axis that is perpendicular to $e_3$. In particular, given that $\Ese$ and $\Ese'$ coincide on a neighbourhood of the identity,
 we conclude that the geometric discretisation that we have introduced above is equivalent to the one considered by Fedorov and Zenkov.

\begin{corollary}
The geometric discretisation of the Suslov problem considered by Fedorov and Zenkov in  \cite{FeZen} is not consistent in the sense of Definition \ref{D:Consistency}.
\end{corollary}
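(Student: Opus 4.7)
The plan is to reduce the corollary to Proposition \ref{P:NonConsistent} by exploiting the local nature of the consistency condition. The key observation, already recorded in the paragraph preceding the statement, is that $\Ese=\mbox{Cay}_\varepsilon(\de)$ is an open neighbourhood of the identity inside $\Ese'=\exp(\de)$, their difference consisting only of rotations by $\pi$ about axes perpendicular to $e_3$. Consequently the corresponding left-invariant discrete constraint spaces $D_d,D_d'\subset SO(3)\times SO(3)$ coincide on some neighbourhood of the diagonal $I_d$.

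First I would emphasise that consistency is an inherently local condition near $I_d$. Indeed, by Definition \ref{D:fdm} a finite difference map $\Psi$ is only required to be a diffeomorphism on some neighbourhood $N_0(I_d)$ of the diagonal, and the requirement $\Psi(D_d)\subset\De$ in Definition \ref{D:Consistency} is imposed only on that neighbourhood. In the reduced, equivariant setting of Proposition \ref{P:Consistency}, this translates into asking that the condition $\psi(\Ese)\subset\de$ hold on a neighbourhood of $e\in SO(3)$ only, so the property depends solely on the germ of $\Ese$ at the identity.

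Combining these two facts, any equivariant finite difference map witnessing consistency of the Fedorov-Zenkov discretisation $(\Ese',\ell_d^{(1,\varepsilon)})$ would, upon restriction to a sufficiently small neighbourhood of $I_d$, equally witness consistency of the pair $(\Ese,\ell_d^{(1,\varepsilon)})$ studied in Proposition \ref{P:NonConsistent}. Since the latter pair was shown there to be non-consistent, no such map can exist for $(\Ese',\ell_d^{(1,\varepsilon)})$ either, proving the corollary.

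I do not anticipate any real obstacle: the argument amounts to observing that Proposition \ref{P:NonConsistent} is robust under the harmless enlargement of $\Ese$ by those half-turns in $\Ese'\setminus\Ese$ which lie far from the identity. The only point requiring care is to make explicit that the consistency condition depends solely on the germ of the discrete constraint at $I_d$, and this follows directly from the definitions of a finite difference map and of consistency.
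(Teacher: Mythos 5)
Your argument is correct and is essentially the paper's own: the text preceding the corollary observes that $\Ese=\mbox{Cay}_\varepsilon(\de)$ and $\Ese'=\exp(\de)$ coincide on a neighbourhood of the identity, so the two discretisations are equivalent there and the corollary follows from Proposition \ref{P:NonConsistent}. Your only addition is to spell out explicitly that consistency depends only on the germ of the discrete constraint at the diagonal, which the paper leaves implicit.
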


It is remarkable that Fedorov and Zenkov \cite{FeZen} do not seem to notice that their discretisation is not consistent given that  they explicitly mention the consistency condition at the end of Section 2 in their paper. Having
chosen $\Ese'=\exp(\de)$, a way to construct a consistent discretisation is to define  $\ell^{(\varepsilon)}_d=\varepsilon \ell \circ \log$, where $\log$ is a local inverse of the exponential map in a neighbourhood of $0\in \so(3)$. 
The inverse Cayley map allows us to make an alternative consistent construction without dealing with the matrix logarithm. This is explained next.

\subsection{Definition of the consistent discretisation.}\label{S:linfty} Consider the reduced constrained Lagrangian $\ell^{(\infty,\varepsilon)}_d:=\varepsilon \ell \circ \mbox{Cay}_\varepsilon^{-1}$, where $\ell$ is given by
\eqref{E:ContLag}. Explicitly, for $W$ in the  range of $\mbox{Cay}_\varepsilon^{-1}$ we have 
 \begin{equation}
 \label{E:DiscLagCayley}
 \begin{split}
\ell^{(\infty,\varepsilon)}_d(W)=\frac{2}{\varepsilon}\mbox{Trace}\left (\J(2 -W-W^T)(2+W+W^T)^{-1} \right ).
\end{split}
\end{equation}
It is immediate to check that the geometric discretisation of the Suslov problem defined by $\Ese$ and $\ell^{(\infty,\varepsilon)}_d$ is consistent. The
role of $\psi$ in \eqref{E:ReducedConsistency} is played by $\mbox{Cay}_\varepsilon^{-1}$.

The $\infty$ superscript in the notation indicates that we use the exact value of $\mbox{Cay}_\varepsilon^{-1}$ in the definition of $\ell^{(\infty,\varepsilon)}_d$ instead of a 
partial sum approximation in its expansion as a  power series in $W-e$ (compare with the definition of $\ell^{(1,\varepsilon)}_d$).

\section{Analysis of the  non-consistent discretisation $(\Ese,\ell^{(1,\varepsilon)}_d)$}
\label{S:req1}

As mentioned before, this is a reparametrisation of the discretisation of the Suslov problem considered by Fedorov and Zenkov in \cite{FeZen}.
We will determine the order of local truncation error of the approximation of the continuous flow by the resulting discrete Euler-Poincar\'e-Suslov equations.
To avoid double subscripts, throughout this section we write $\omega_1=u, \, \omega_2=v$.

\subsection{The momentum locus $\mathfrak{u}^{(1)}_{\varepsilon}$}
\label{E:MomLocusMosVes}
We identify $\so(3)^*$ with $\so(3)$ via the inner product \eqref{E:pairing}. The discrete Legendre transformation \eqref{DiscMomentum}
associated to the discrete Lagrangian $\ell^{(1,\varepsilon)}$ given by \eqref{E:DiscLagMosVes} is computed to be 
\begin{equation*}
\label{E:DiscLegTrans1}
\mathbb{F}\ell^{(1,\varepsilon)}_d(W)=\frac{1}{\varepsilon}(W\J-\J W^T).
\end{equation*}
Using   \eqref{E:Cayd}, we find that for $W=\mbox{Cay}_\epsilon(u, v,0)$ we have
$M=\F \ell_d^{(1,\varepsilon)}(W) $ given by
\begin{subequations}\label{MParame}
\begin{align}
M_1=&\frac{2}{4+\varepsilon^2(u^2+v^2)}(2I_{11}u +\varepsilon v (I_{13}u+I_{23}v)),\label{MParamea}\\
M_2=&\frac{2}{4+\varepsilon^2(u^2+v^2)}(2I_{22}v-\varepsilon u(I_{13}u+I_{23}v)),\label{MParameb}\\
M_3=&\frac{2}{4+\varepsilon^2(u^2+v^2)}(2(I_{13}u+I_{23}v)+\varepsilon(I_{22}-I_{11})uv),\label{MParamec}
\end{align}
\end{subequations} 
where we are using the hat map to identify $\so(3)^*\cong \so(3)$ with $\R^3$.

By putting $(u,v)=(0,0)$ in  \eqref{MParame} it is clear that $0\in \mathfrak{u}^{(1)}_\varepsilon$. Moreover, a direct calculation using this parametrisation shows that a
normal vector to $\mathfrak{u}^{(1)}_{\varepsilon}$ at the origin is $(I_{22}I_{13},I_{11}I_{23},-I_{11}I_{22})$. This vector is readily seen to be the normal vector
to the plane $\mathfrak{d}^*$ defined in \eqref{E:defdestarSuslov}. Therefore we have shown the following.
\begin{proposition}
For any $\varepsilon>0$, the momentum locus $\mathfrak{u}^{(1)}_{\varepsilon}$ and the plane $\mathfrak{d}^*$ are tangent at the origin in $\so(3)^*$.
\end{proposition}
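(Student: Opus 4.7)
My plan is to verify tangency at the origin in two steps: first check that $0\in\mathfrak{u}^{(1)}_{\varepsilon}\cap\mathfrak{d}^{*}$, then show that $T_0\mathfrak{u}^{(1)}_{\varepsilon}=\mathfrak{d}^{*}$ as linear subspaces of $\so(3)^{*}\cong\R^{3}$. The first point is immediate: substituting $(u,v)=(0,0)$ in \eqref{MParame} yields $M=0$, and the defining equation \eqref{E:defdestarSuslov} of $\mathfrak{d}^{*}$ is linear (hence passes through the origin).

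For the tangent space, I would compute the Jacobian of the smooth map $(u,v)\mapsto(M_1,M_2,M_3)$ at $(0,0)$ using the parametrisation \eqref{MParame}. The key observation is that every $\varepsilon$-dependent term appearing in the numerators of \eqref{MParamea}--\eqref{MParamec} carries a factor of $u^{2}$, $v^{2}$ or $uv$, and therefore contributes nothing to the linear part; likewise the common prefactor $2/(4+\varepsilon^{2}(u^{2}+v^{2}))$ equals $1/2$ at the origin with only quadratic corrections. It follows that the Jacobian at $(0,0)$ is
\[
\begin{pmatrix} \partial_u M_1 & \partial_v M_1 \\ \partial_u M_2 & \partial_v M_2 \\ \partial_u M_3 & \partial_v M_3 \end{pmatrix}_{\!(0,0)} = \begin{pmatrix} I_{11} & 0 \\ 0 & I_{22} \\ I_{13} & I_{23} \end{pmatrix},
\]
which is independent of $\varepsilon$. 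Hence $T_0\mathfrak{u}^{(1)}_{\varepsilon}$ is the two-dimensional plane spanned by $(I_{11},0,I_{13})$ and $(0,I_{22},I_{23})$.

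To conclude, I would substitute both spanning vectors into the linear form $I_{11}I_{22}M_3-I_{13}I_{22}M_1-I_{11}I_{23}M_2$ that cuts out $\mathfrak{d}^{*}$ in \eqref{E:defdestarSuslov}; the two substitutions give $I_{11}I_{22}I_{13}-I_{13}I_{22}I_{11}=0$ and $I_{11}I_{22}I_{23}-I_{11}I_{23}I_{22}=0$, so $T_0\mathfrak{u}^{(1)}_{\varepsilon}\subseteq \mathfrak{d}^{*}$, and equality follows by dimension count. Equivalently, the cross product of the two spanning vectors is $(-I_{13}I_{22},-I_{11}I_{23},I_{11}I_{22})$, which matches (up to sign) the normal vector stated just before the proposition.

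There is no genuine obstacle; the only conceptual subtlety is noticing that the Jacobian at the origin is $\varepsilon$-independent, which is what guarantees the tangency holds uniformly in the time step. Heuristically this is natural: for $(u,v)\to 0$ the Cayley parametrisation of $\mathcal{S}$ approaches the identification $\so(3)\cong T_eSO(3)$ and the discrete Legendre transform $\mathbb{F}\ell^{(1,\varepsilon)}_d$ approaches the continuous inertia tensor $\mathbb{I}$, whose image on $\mathfrak{d}$ is exactly $\mathfrak{d}^{*}$.
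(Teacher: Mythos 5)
Your proof is correct and follows essentially the same route as the paper: the paper also substitutes $(u,v)=(0,0)$ into \eqref{MParame} to see that $0\in\mathfrak{u}^{(1)}_{\varepsilon}$ and then identifies, by direct calculation from that parametrisation, the normal vector $(I_{22}I_{13},I_{11}I_{23},-I_{11}I_{22})$ as the normal to $\mathfrak{d}^{*}$. Your version merely makes the ``direct calculation'' explicit via the Jacobian and its cross product, and correctly notes the $\varepsilon$-independence and the rank-two condition that the paper leaves implicit.
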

Figure \ref{F:MomLocusMosVes} illustrates both the momentum locus $\mathfrak{u}^{(1)}_{\varepsilon}$ and the plane $\de^*$ for generic 
numerical values of $\I$ and a small value of $\varepsilon$.
\begin{figure}[h!]
\centering
\includegraphics[width=6.5cm]{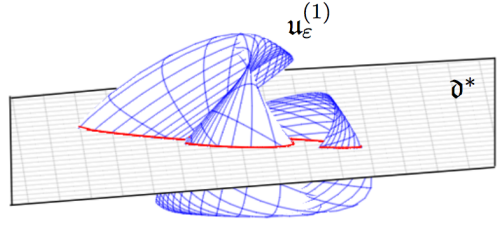}
% \put (57,38) {$\mathfrak{u}^{(1)}_\varepsilon$}
%  \put (90,25) {$\mathfrak{d}^*$}
\caption{{\small The discrete momentum locus $\mathfrak{u}^{(1)}_{\varepsilon}$ defined by $\ell_d^{(1,\varepsilon)}$ and the plane $\de^*$ immersed in $\so(3)^*=\R^3$. Although it cannot be appreciated
from the figure, the surface is tangent to the plane at the origin.
}}
\label{F:MomLocusMosVes}
\end{figure}

The figure illustrates that $\mathfrak{u}^{(1)}_{\varepsilon}$ is bounded, which can be
directly proved from  \eqref{MParame}. It is a complicated surface with pinch points and self intersections. In the Appendix we show that  $\mathfrak{u}^{(1)}_{\varepsilon}$  is contained in the zero locus of a degree $4$ polynomial
in $M_1,M_2,M_3$.

\subsection{The discrete  Euler-Poincar\'e-Suslov equations}
\label{S:DiscreteEPSMoserVes}

Now, we turn our attention towards the discrete Euler-Poincar\'e-Suslov equations \eqref{DEPSAlg}, which in our case read
\begin{equation}\label{MdiscGene}
M_{k+1}=W_k^T\,M_k\,W_k+\lambda_k\left ( \begin{array}{ccc}
0&-1&0\\
1&0&0\\
0&0&0
\end{array}
\right )
\end{equation}
where $W_k\in\mathcal{S}$ and the multiplier $\lambda_k$ is determined by the condition that $M_{k+1}\in \mathfrak{u}^{(1)}_\varepsilon \subset \so(3)^*\cong \so (3)$.

Using  \eqref{E:Cayd} and the hat map,  the above identity between skew-symmetric $3\times 3$ matrices is written in vector form as
\begin{equation*}\label{Mdisc}
 M_{k+1}= M_k+\frac{4\varepsilon}{4+\varepsilon^2(u_k^2+v_k^2)}\left (  \begin{array}{c}
-v_k(I_{13}u_k+I_{23}v_k)\\
\,\,\,\,\,\,\,u_k(I_{13}u_k+I_{23}v_k)\\
\,\,\,\,(I_{11}-I_{22})\,u_kv_k
\end{array}\right ) +\left ( \begin{array}{c}
0\\
0\\
\lambda_k
\end{array}\right ).
\end{equation*}
Taking into account 
 \eqref{MParame}, the first two components in the above equation yield
 \begin{equation}
\label{E:F-Z-scheme-full}
\begin{split}
\frac{4I_{11}u_{k+1}+2\varepsilon v_{k+1}(I_{13}u_{k+1}+ I_{23}v_{k+1})}{4+\varepsilon^2(u_{k+1}^2+v_{k+1}^2)} &=
\frac{4I_{11}u_{k}-2\varepsilon v_{k}(I_{13}u_{k}+I_{23}v_{k})}{4+\varepsilon^2(u_{k}^2+v_{k}^2)} \, ,\\
\frac{4I_{22}v_{k+1}-2\varepsilon u_{k+1}(I_{23}v_{k+1}+I_{13}u_{k+1})}{4+\varepsilon^2(u_{k+1}^2+v_{k+1}^2)} &=\frac{4I_{22}v_{k}+
2\varepsilon u_{k}(I_{23}v_{k}+I_{13}u_{k})}{4+\varepsilon^2(u_{k}^2+v_{k}^2)} \, .
\end{split}
\end{equation}

Given $(u_{k},v_{k})$, in order to obtain $(u_{k+1},v_{k+1})$ one needs to solve the polynomial equations
\begin{equation}
\label{E:F-Z-scheme}
p_2(u,v)=0, \qquad q_2(u,v)=0,
\end{equation}
for $u$ and $v$ where $p_2$, $q_2$ are degree two polynomials  given by
\begin{equation*}
\begin{split}
p_2(u,v)&=4I_{11}u+2\varepsilon I_{13}uv+2\varepsilon I_{23}v^2-A(4+\varepsilon^2(u^2+v^2)), \\
q_2(u,v)&=4I_{22}v-2\varepsilon I_{23}uv-2\varepsilon I_{13}u^2-B(4+\varepsilon^2(u^2+v^2)).
\end{split}
\end{equation*}
Here $A$ and $B$ depend on $(u_{k},v_{k})$ as
\begin{equation*}
A=\frac{4I_{11}u_{k}-2\varepsilon v_{k}( I_{13}u_{k}+ I_{23}v_{k})}{4+\varepsilon^2(u_{k}^2+v_{k}^2)}, \qquad 
B=\frac{4I_{22}v_{k}+2\varepsilon u_{k} ( I_{23}v_{k}+I_{13}u_{k})}{4+\varepsilon^2(u_{k}^2+v_{k}^2)} .
\end{equation*}

In order to understand the number of solutions that \eqref{E:F-Z-scheme} has, we compute the resultant of the polynomials $p_2, q_2$ with respect
to the variable $u$. This gives a polynomial of degree four in $v$ whose 
leading coefficient  is
\begin{equation*}
4\varepsilon^5 (I_{13}^2+I_{23}^2)(2I_{13}B-2I_{23}A+\varepsilon( A^2+B^2)).
\end{equation*}
Under the assumption that the inertia tensor is not diagonal, for generic $(u_{k},v_{k})$, this is non-zero. Hence \eqref{E:F-Z-scheme} generically admits 4 different (possibly complex) solutions for $(u,v)$.
 Experiments show that if $u_{k},v_{k}$ are real, and $\varepsilon>0$ is small enough, then two solutions are real and two are complex. This is consistent with the results reported by Fedorov-Zenkov in \cite{FeZen}, and with the degree of the polynomial $p_4$ given in Proposition \ref{4orderp} in the Appendix.
\subsection{Approximation of the continuous flow}
\label{S:AsympExpSuslovMoserVes}

From the above discussion it follows that \eqref{E:F-Z-scheme-full} 
generically defines a 4-valued map from $\C^2$ to itself. We shall now explain how to select a branch  to construct
a (locally defined) single valued discrete time map\footnote{The subindices  $1$, $\varepsilon$,
on $\mathcal{B}_{(1,\varepsilon)}$ are inherited from the  discrete Lagrangian $\ell_d^{(1,\varepsilon)}$.}  $\mathcal{B}_{(1,\varepsilon)}$ on $\mathcal{S}$ that approximates
the flow of the continuous Suslov problem on $SO(3)$.

A matrix $W_k\in \mathcal{S}$ specifies a point $(u_k,v_k)\in \R^2$ through the inverse Cayley map ($W_k$ is 
represented by \eqref{E:Cayd} replacing $(\omega_1,\omega_2)$ by $(u_k,v_k)$).
Applying the implicit function theorem to  equations  \eqref{E:F-Z-scheme-full} one can show that, for $\varepsilon$ small enough,
there exists a unique solution for $(u_{k+1},v_{k+1})$ that depends smoothly on $u_k, v_k$ and $\varepsilon$ and converges to
$(u_k,v_k)$ when $\varepsilon \too 0$.  The point  $(u_{k+1},v_{k+1})$ defines the matrix $W_{k+1}:=\mathcal{B}_{(1,\varepsilon)}(W_k)\in \mathcal{S}$ via the Cayley map  \eqref{E:Cayd}. The asymptotic expansion for  $(u_{k+1},v_{k+1})$ in terms of $(u_k,v_k)$  is given by
\begin{equation}
\label{E:AsymMosVesB}
\begin{split}
u_{k+1}&=u_k + \varepsilon A_1 +\varepsilon^2 A_2+\varepsilon^3A_3^{(1)}  +\mathcal{O}(\varepsilon^4), \\
v_{k+1}&=v_k  + \varepsilon B_1 +\varepsilon^2 B_2+\varepsilon^3B_3^{(1)}  +\mathcal{O}(\varepsilon^4),
\end{split}
\end{equation}
where
\begin{equation}
\label{E:AsymAuxGeneral}
\begin{split}
A_1&= - \frac{v_k(I_{13}u_k+I_{23}v_k)}{I_{11}}, \\
A_2 &=  -\frac{1}{2I_{11}^2I_{22}}(I_{13}u_k+I_{23}v_k)(I_{11}I_{13}u_k^2+2I_{23}I_{11}u_kv_k-I_{22}I_{13}v_k^2), \\
B_1&= \frac{u_k(I_{13}u_k+I_{23}v_k)}{I_{22}}, \\
B_2 &= -\frac{1}{2I_{11}I_{22}^2}(I_{13}u_k+I_{23}v_k)(I_{22}I_{23}v_k^2+2I_{13}I_{22}u_kv_k-I_{11}I_{23}u_k^2),
\end{split}
\end{equation}
and
\begin{equation*}
\label{E:AsymAuxMosVes}
\begin{split}
A_3^{(1)}&= \frac{-1}{4I_{11}^3I_{22}^2}(I_{13}u_k+I_{23}v_k)\left( 3I_{11}^2I_{13}I_{23}u^3_k-9I_{11}I_{13}I_{22}I_{23}u_kv^2_k\right.\\
&\left.+I_{11}(2I_{11}I_{22}^2-2I_{11}^2I_{22}+4I_{11}I_{23}^2-5I_{13}^2I_{22})u^2_kv_k -I_{22}(2I_{11}I_{23}^2-I_{13}^2I_{22})v^3_k\right), \\ 
B_3^{(1)}&=\frac{1}{4I_{11}^2I_{22}^3}(I_{13}u_k+I_{23}v_k)\left( 3I_{13}I_{22}^2I_{23}v^3_k-9I_{11}I_{13}I_{22}I_{23}u^2_kv_k
\right.\\
&\left. +I_{22}(2I_{22}I_{11}^2-2I_{22}^2I_{11}+4I_{22}I_{13}^2-5I_{23}^2I_{11})v^2_ku_k
-I_{11}(2I_{22}I_{13}^2-I_{23}^2I_{11})u^3_k\right).
\end{split}
\end{equation*}

It can be shown that the momentum locus $\mathfrak{u}^{(1)}_\varepsilon\subset \so(3)^*$ has pinch points and self-intersections. This 
is suggested by its graph in Figure \ref{F:MomLocusMosVes} and agrees with the discussion in Fedorov and Zenkov \cite{FeZen}. Away from these points,
the inverse of the discrete Legendre transformation is defined and is smooth, so we can locally define the discrete
time momentum mapping on $\mathfrak{u}^{(1)}_\varepsilon$ by
\begin{equation}
\label{E:DefBstarMosVes}
\mathcal{B}_{(1,\varepsilon)}^*=\mathbb{F}\ell_d^{(1,\varepsilon)} \circ \mathcal{B}_{(1,\varepsilon)} \circ (\mathbb{F}\ell^{(1,\varepsilon)}_d)^{-1}.
\end{equation} 

We will  compute the order of local truncation error of the approximation of the continuous flow of \eqref{E:EulerPoincareDual}  by
the projection of  $\mathcal{B}_{(1,\varepsilon)}^*$ onto the $M_1$-$M_2$ plane.

By the implicit function theorem, the equations \eqref{MParamea}, \eqref{MParameb}, can be inverted in  a vicinity of $(M_1,M_2)=(0,0)$ where
we consider $\varepsilon$ as a parameter. The   asymptotic expansion for the inversion as $\varepsilon \to 0$ is 
computed to be given by
\begin{equation}
\label{E:InvLegMosVes}
\begin{split}
u&=\frac{M_1}{I_{11}}+\varepsilon F_1 +\varepsilon^2 F_2^{(1)} + \varepsilon^3 F_3^{(1)} +  \mathcal{O}(\varepsilon^4), \\
v&=\frac{M_2}{I_{22}}+\varepsilon G_1 +\varepsilon^2 G_2^{(1)} + \varepsilon^3 G_3^{(1)} +  \mathcal{O}(\varepsilon^4),
\end{split}
\end{equation}
where
\begin{equation}
\label{E:InvLegAuxGeneral}
\begin{split}
F_1&=-\frac{1}{2I_{11}^2I_{22}^2}(I_{13}I_{22}M_1+I_{11}I_{23}M_2) M_2, \\
G_1&=\,\,\,\,\,\frac{1}{2I_{11}^2I_{22}^2}(I_{13}I_{22}M_1+I_{11}I_{23}M_2)M_1, 
\end{split}
\end{equation}
and
\begin{equation*}
\begin{split}
&F_2^{(1)}=\frac{1}{4I_{11}^4I_{22}^3}\left ( I_{22}^2(I_{11}I_{22}-I_{13}^2 )  M_1^3  + I_{11}(I_{11}^2I_{22}-2I_{11}
I_{23}^2+I_{13}^2I_{22})M_1M_2^2 \right . \\
  & \qquad \qquad \qquad \left .  -3I_{11}I_{13}I_{22}I_{23}M_1^2M_2 + I_{11}^2I_{13}I_{23}M_2^3\right ),  \\
&F_3^{(1)} = \frac{-1}{8I_{11}^5I_{22}^5}(I_{13}I_{22}M_1+I_{11}I_{23}M_2)\left( 2I_{13}I_{22}^2I_{23}M_1^3+I_{22}(4I_{11}I_{22}^2-2I_{11}^2I_{22} \right.\\
&\left. \qquad \qquad \qquad +3I_{11}I_{23}^2-4I_{13}^2I_{22})M_1^2M_2-8I_{11}I_{13}I_{22}I_{23}M_1M_2^2
\right.\\
&\left.  \qquad \qquad \qquad
+I_{11}(2I_{11}^2I_{22}-2I_{11}I_{23}^2+I_{13}^2I_{22})M_2^3\right), \end{split}
\end{equation*}\begin{equation*}\begin{split}
&G_2^{(1)}= \frac{1}{4I_{11}^3I_{22}^4}\left (I_{11}^2(I_{11}I_{22}-I_{23}^2 )M_2^3 + I_{22}(I_{11}I_{22}^2-2I_{22}I_{13}^2+I_{11}
I_{23}^2)M_1^2M_2  \right . \\  & \qquad \qquad \qquad  \left .  -3I_{11}I_{13}I_{22}I_{23}M_1M_2^2 
 +  I_{22}^2I_{23}I_{13}  M_1^3\right ), \\
&G_3^{(1)} = \frac{1}{8I_{11}^5I_{22}^5}(I_{13}I_{22}M_1+I_{11}I_{23}M_2)\left(2I_{23}I_{11}^2I_{13}M_2^3+I_{11}(4I_{22}I_{11}^2-2I_{22}^2I_{11}\right.\\
&\left. \qquad \qquad \qquad+3I_{22}I_{13}^2-4I_{23}^2I_{11})M_1M_2^2-8I_{11}I_{13}I_{22}I_{23}M_1^2M_2\right.\\
&\left. \qquad \qquad \qquad+ I_{22}(2I_{22}^2I_{11}-2I_{22}I_{13}^2+I_{23}^2I_{11})M_1^3\right).
\end{split}
\end{equation*}

On the other hand, equations \eqref{MParamea}, \eqref{MParameb} admit the asymptotic
expansion in $\varepsilon$
\begin{equation}
\begin{split}
\label{E:SeriesMuvMosVes}
M_1&=I_{11}u+\frac{\varepsilon}{2}(I_{13}u+I_{23}v)v-\frac{\varepsilon^2}{4}I_{11}u(u^2+v^2)\\ 
& \qquad \qquad  \qquad -\frac{\varepsilon^3}{8}v(I_{13}u+I_{23}v)(u^2+v^2)+\mathcal{O}(\varepsilon^4) , \\
M_2&=I_{22}v-\frac{\varepsilon}{2}(I_{13}u+I_{23}v)u-\frac{\varepsilon^2}{4}I_{22}v(u^2+v^2)\\ 
& \qquad \qquad  \qquad +\frac{\varepsilon^3}{8}u(I_{13}u+I_{23}v)(u^2+v^2) +\mathcal{O}(\varepsilon^4). 
\end{split}
\end{equation}

An asymptotic expansion for the projection of $\mathcal{B}_{(1,\varepsilon)}^*$  onto the
$M_1$-$M_2$ plane can now be obtained using 
\eqref{E:AsymMosVesB}, \eqref{E:InvLegMosVes} and  \eqref{E:SeriesMuvMosVes}. Denoting $((M_1)_{k+1}, (M_2)_{k+1})=(\tilde M_1, \tilde M_2)$
and $((M_1)_{k}, (M_2)_{k})=(M_1,M_2)$, we have
\begin{equation}
\label{E:ExpansionMMosVes}
\begin{split}
\tilde M_1&=M_1 + \varepsilon \mu_1 + \varepsilon^2 \mu_2 +  \varepsilon^3 \mu_3^{(1)}+\mathcal{O}(\varepsilon^4),  \\
\tilde M_2&=M_2 + \varepsilon \nu_1 + \varepsilon^2 \nu_2 +  \varepsilon^3 \nu_3^{(1)}+\mathcal{O}(\varepsilon^4),
\end{split}
\end{equation}
where $\mu_1, \mu_2, \nu_1, \nu_2,$ are given by \eqref{E:ExpansionMAuxGeneral}, and
\begin{equation}
\label{E:ExpansionMAuxMosVes}
\begin{split}
& \mu_3^{(1)}=\frac{-1}{4I_{11}^4I_{22}^5}\left ( I_{13}I_{22}M_1+I_{11}I_{23}M_2\right ) \left(  2 I_{13}I_{22}^2I_{23}M_1^3 -8I_{11}I_{13}I_{22}I_{23} M_1M_2^2 \right. \\
 &\left. +\left (  I_{11}I_{22}^3 +3I_{11}I_{23}^2I_{22}-4I_{13}^2I_{22}^2 \right ) M_1^2M_2 + \left ( I_{11}^3 I_{22} - 2I_{11}^2I_{23}^2 +I_{11}I_{22}I_{13}^2 \right ) M_2^3 \right), \\
& \nu_3^{(1)}=\frac{1}{4I_{11}^5I_{22}^4}\left ( I_{13}I_{22}M_1+I_{11}I_{23}M_2\right )  \left(  2 I_{13}I_{11}^2I_{23}M_2^3 -8I_{11}I_{13}I_{22}I_{23} M_1^2M_2 \right. \\
 &\left. + \left (  I_{11}^3I_{22} +3I_{11}I_{22}I_{13}^2-4I_{23}^2I_{11}^2 \right )  M_1M_2^2 + \left ( I_{11} I_{22}^3 - 2I_{22}^2I_{13}^2 +I_{22}I_{11}I_{23}^2 \right ) M_1^3  \right).
\end{split}
\end{equation}
Expansions \eqref{E:ExpansionMMosVes} are valid in a vicinity of $(M_1,M_2)=(0,0)$ and for a small $\varepsilon>0$.
Comparing \eqref{E:ExpansionMcont} and \eqref{E:ExpansionMAuxCont}, with \eqref{E:ExpansionMMosVes} and \eqref{E:ExpansionMAuxMosVes},
we conclude the following.
\begin{theorem}
\label{T:AccuracyMosVes}
The local truncation error of the approximation of the flow of the Euler-Poincar\'e-Suslov equations
 \eqref{E:EulerPoincareDual} by the  projection of the discrete time momentum mapping $\mathcal{B}_{(1,\varepsilon)}^*$  onto the 
$M_1$-$M_2$ plane is  second order.
\end{theorem}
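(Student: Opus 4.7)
The plan is to compare term by term the asymptotic expansions \eqref{E:ExpansionMcont} of the exact flow of \eqref{E:EulerPoincareDual} with the expansion of the projection of $\mathcal{B}_{(1,\varepsilon)}^*$ onto the $M_1$-$M_2$ plane, which I will derive in three stages corresponding to the three maps composed in \eqref{E:DefBstarMosVes}. Once both expansions are in hand it suffices to check that the coefficients of $\varepsilon^0, \varepsilon^1, \varepsilon^2$ coincide and that the coefficients of $\varepsilon^3$ differ; the local truncation error is then of second order by the standard definition.

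First I would establish \eqref{E:AsymMosVesB}, the expansion of the discrete map $\mathcal{B}_{(1,\varepsilon)}$ on $\mathcal{S}$ in Cayley coordinates $(u,v)$. The implicit function theorem applied to the system \eqref{E:F-Z-scheme-full}, regarded as smooth in $(u_{k+1},v_{k+1},u_k,v_k,\varepsilon)$ with nonsingular Jacobian at $\varepsilon=0$ (where it reduces to $u_{k+1}=u_k$, $v_{k+1}=v_k$), produces a unique smooth branch through $(u_k,v_k)$. Its Taylor expansion in $\varepsilon$ is obtained by substituting the ansatz $u_{k+1}=u_k+\varepsilon A_1+\varepsilon^2 A_2+\cdots$, $v_{k+1}=v_k+\varepsilon B_1+\varepsilon^2 B_2+\cdots$ into \eqref{E:F-Z-scheme-full} and solving order by order for the coefficients. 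This yields the formulae \eqref{E:AsymAuxGeneral} at orders $\varepsilon$ and $\varepsilon^2$ and those for $A_3^{(1)},B_3^{(1)}$ at order $\varepsilon^3$.

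Next I would expand the discrete Legendre transformation and its inverse. From the explicit parametrisation \eqref{MParame} of $\F\ell_d^{(1,\varepsilon)}$ composed with the Cayley map one reads off the expansion \eqref{E:SeriesMuvMosVes} of $(M_1,M_2)$ in $\varepsilon$. The implicit function theorem applied to \eqref{MParamea}--\eqref{MParameb} (near $(M_1,M_2)=(0,0)$, where it reduces at $\varepsilon=0$ to $M_1=I_{11}u$, $M_2=I_{22}v$) then provides the smooth local inverse, whose expansion \eqref{E:InvLegMosVes} I would obtain by inserting the ansatz $u=M_1/I_{11}+\varepsilon F_1+\varepsilon^2 F_2^{(1)}+\cdots$ and equating coefficients.

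Finally I would compose the three expansions in the order $(\F\ell_d^{(1,\varepsilon)})^{-1}$, then $\mathcal{B}_{(1,\varepsilon)}$, then $\F\ell_d^{(1,\varepsilon)}$, truncating consistently at order $\varepsilon^3$. This directly produces \eqref{E:ExpansionMMosVes} with coefficients $\mu_1,\nu_1,\mu_2,\nu_2$ as in \eqref{E:ExpansionMAuxGeneral} and with $\mu_3^{(1)},\nu_3^{(1)}$ as in \eqref{E:ExpansionMAuxMosVes}. Matching against \eqref{E:ExpansionMcont}--\eqref{E:ExpansionMAuxCont} term by term, the first three orders agree exactly, while the third-order coefficients $\mu_3^{(1)},\nu_3^{(1)}$ differ from $\mu_3^{(C)},\nu_3^{(C)}$ (this nonvanishing difference is generic in $\I$ and can be exhibited by evaluating at a single example). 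The main obstacle I anticipate is bookkeeping rather than conceptual: the symbolic algebra required to propagate the three substitutions through order $\varepsilon^3$ with nondiagonal $\I$ is lengthy, and keeping track of the monomials in $(M_1,M_2)$ with correct coefficients in $I_{11},I_{22},I_{13},I_{23}$ is where errors are easy to make, so I would rely on a computer algebra verification for the cubic coefficients.
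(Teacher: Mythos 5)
Your proposal follows essentially the same route as the paper: the implicit-function-theorem expansion of \eqref{E:F-Z-scheme-full} giving \eqref{E:AsymMosVesB}, the expansion and local inversion of the discrete Legendre transformation \eqref{MParamea}--\eqref{MParameb} giving \eqref{E:SeriesMuvMosVes} and \eqref{E:InvLegMosVes}, the composition of the three maps as in \eqref{E:DefBstarMosVes}, and the term-by-term comparison with \eqref{E:ExpansionMcont}--\eqref{E:ExpansionMAuxCont} showing agreement through order $\varepsilon^2$ and disagreement at order $\varepsilon^3$. The argument, including the reliance on symbolic computation for the cubic coefficients, is correct and matches the paper's proof.
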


\section{Analysis of the consistent discretisation $(\Ese,\ell^{(\infty,\varepsilon)}_d)$}
\label{S:reqInfty}

We repeat the analysis performed in \S\ref{S:req1} for the discretisation defined by $\ell^{(\infty,\varepsilon)}_d$ and $\Ese$. As in the previous
section, we denote $(\omega_1,\omega_2)=(u,v)$ to avoid double subscripts.

\subsection{The momentum locus $\mathfrak{u}_{\varepsilon}^{(\infty)}$} Upon the same considerations of \S\ref{E:MomLocusMosVes},
the discrete Legendre transformation \eqref{DiscMomentum}
associated to the discrete Lagrangian $\ell^{(\infty,\varepsilon)}_d$ given by \eqref{E:DiscLagCayley} is computed to be
\begin{equation}
\label{E:DiscLegTrans2}
\begin{split}
\mathbb{F}\ell^{(\infty,\varepsilon)}_d(W)&=\frac{2}{\varepsilon}( Wg(W)\J-g(W)\J W^T + Wg(W)\J h(W)g(W)  \\ 
\ &-g(W)\J h(W) g(W) W^T  \J g(W)W^T+W\J g(W) \\
&-g(W)h(W)\J g(W) W^T + Wg(W)h(W)\J g(W) ),
\end{split}
\end{equation}
where
\begin{equation*}
g(W)=(2+W+W^T)^{-1}, \qquad h(W)=2-W-W^T.
\end{equation*}

 Using 
 \eqref{E:DiscLegTrans2} and \eqref{E:Cayd}, we obtain the following expression for the components of 
 $M=\mathbb{F}\ell^{(\infty,\varepsilon)}_d(W)$ where $W=\mbox{Cay}_\varepsilon(u,v,0)$
\begin{subequations}\label{MParameInfty}
\begin{align}
M_1=& I_{11}u +\frac{\varepsilon}{2}v(I_{13}u+I_{23}v) 
+ \frac{\varepsilon^2}{4}u(I_{11}u^2+I_{22}v^2),\label{MParameInftya}\\
M_2=&I_{22}v -\frac{\varepsilon}{2}u(I_{13}u+I_{23}v) 
+ \frac{\varepsilon^2}{4}v(I_{11}u^2+I_{22}v^2),\label{MParameInftyb}\\
M_3= &I_{13}u+I_{23}v +\frac{\varepsilon}{2}uv(I_{22}-I_{11}).
\label{MParameInftyc}
\end{align}
\end{subequations}

Also in analogy with the discussion presented in  \S\ref{S:req1}, one can check that  the momentum locus $\mathfrak{u}^{(\infty)}_\varepsilon$ contains the origin, 
 where it is tangent to the  plane $\mathfrak{d}^*$ defined in \eqref{E:defdestarSuslov}. 
Figure \ref{F:MomLocusCayley} illustrates both the momentum locus $\mathfrak{u}^{(\infty)}_{\varepsilon}$ and the plane $\de^*$ for generic 
numerical values of $\I$ and a small value of $\varepsilon$.

\begin{figure}[h!]
\centering
\includegraphics[width=4.6cm]{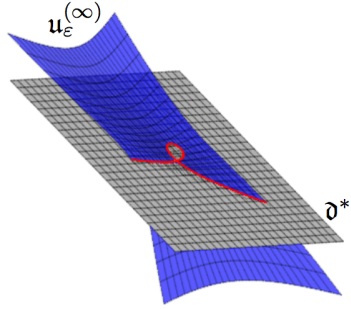}
% \put (13,80) {$\mathfrak{u}^{(\infty)}_\varepsilon$}
%  \put (94,25) {$\mathfrak{d}^*$}
%\end{overpic}
\caption{{\small The discrete momentum locus $\mathfrak{u}^{(\infty)}_{\varepsilon}$ defined by $\ell_d^{(\infty,\varepsilon)}$ and the plane $\de^*$ immersed in $\so(3)^*=\R^3$. 
They intersect along the red curve, which self-intersects  at the origin where $\mathfrak{u}^{(\infty)}_{\varepsilon}$ is tangent to $\de^*$.
}}
\label{F:MomLocusCayley}
\end{figure}

Notice that $\mathfrak{u}^{(\infty)}_{\varepsilon}$ appears to be unbounded. This can be directly proved using the parametrisation  \eqref{MParameInfty}
and contrasts with the situation encountered in the study of the 
momentum locus defined by $\ell^{(1,\varepsilon)}_d$ and illustrated in Figure \ref{F:MomLocusMosVes}. In the Appendix we show that $\mathfrak{u}^{(\infty)}_{\varepsilon}$
is contained in the zero locus of a polynomial of degree $7$ in $(M_1,M_2,M_3)$.

\subsection{Discrete Euler-Poincar\'e-Suslov equations}

Using \eqref{E:Cayd}, \eqref{MParameInftya} and  \eqref{MParameInftyb}, the components of the Euler-Poincar\'e-Suslov equations \eqref{MdiscGene} 
that do not involve the multiplier $\lambda_k$ may be rewritten as a set of implicit equations to determine $(u_{k+1},v_{k+1})$ in terms of 
$(u_{k},v_{k})$. These equations are
\begin{equation}
\label{E:Alt-scheme-full}
\begin{split}
p_3^+(u_{k+1},v_{k+1})
 &=p_3^-(u_k,v_k) \, ,\\
q_3^-(u_{k+1},v_{k+1}) &=q_3^+(u_k,v_k)\, ,
\end{split}
\end{equation}
where $p_3^\pm$ and $q_3^\pm$ are the degree three polynomials 
\begin{equation*}
\begin{split}
p_3^\pm(u,v)&=I_{11} u\pm \frac{\varepsilon}{2}v \left (I_{13}u + I_{23}v \right ) + \frac{\varepsilon^2}{4}u\left ( I_{11}u^2 + I_{22}v^2 \right ), \\
q_3^\pm(u,v)&=I_{22} v \pm  \frac{\varepsilon}{2}u\left ( I_{13}u + I_{23}v\right )+ \frac{\varepsilon^2}{4}v\left ( I_{11}u^2 + I_{22}v^2 \right ).
\end{split}
\end{equation*}

In order to understand the number of solutions that \eqref{E:Alt-scheme-full} has, we compute the resultant of the polynomials $p_3^+-c_1, q_3^--c_2$ with respect
to the variable $u$, where $c_1$ and $c_2$ are scalars. This gives a polynomial of degree seven in $v$, say $\alpha(v)=\sum_{j=0}^7a_jv^j$. The coefficients $a_7$ and $a_6$  of this polynomial are 
\begin{equation*}
\begin{split}
a_7&=\frac{\varepsilon^8}{256}\left (I_{11}-I_{22})^2(I_{11}I_{23}^2+I_{22}I_{23}^2 \right ), \\
a_6&=\frac{\varepsilon^7}{64}\left (I_{11}-I_{22})(I_{11}^2I_{22}- I_{11}I_{22}^2-I_{11} I_{23}^2-I_{13}^2I_{22} \right ).
\end{split}
\end{equation*}
If the inertia tensor is non-diagonal and $I_{11}\neq I_{22}$, then $a_7\neq 0$ and  hence \eqref{E:Alt-scheme-full} admits exactly 7 (possibly complex) 
 solutions. This coincides with the degree of the polynomial $q_7$ given in Proposition \ref{Propoq7} of the Appendix. Experiments show that if $u_{k},v_{k}$
are real, then 1 solution is real and six are complex.

On the other hand, if $I_{11}=I_{22}$ then $\alpha$ has degree 5. In this case, the leading coefficient is
\begin{equation*}
a_5=\frac{I_{11}\varepsilon^6}{64}\left (I_{13}^2+I_{23}^2 \right ),
\end{equation*}
that does not vanish if the inertia tensor is non-diagonal. Hence, in this case \eqref{E:Alt-scheme-full} admits exactly 5 (possibly complex) solutions. Experiments show that if $u_{k},v_{k}$
are real, then 1 solution is real and four are complex.

\subsection{Approximation of the continuous flow}
\label{S:AsympExpSuslovCayley}

In analogy with the discussion of  \S\ref{S:AsympExpSuslovMoserVes}, one of the branches of  
the scheme \eqref{E:Alt-scheme-full} defines a single-valued discrete time map $\mathcal{B}_{(\infty,\varepsilon)}$
on $\Ese$ that approximates the flow of the continuous Suslov problem on $SO(3)$.  The asymptotic expansion for such  $(u_{k+1},v_{k+1})$ in terms of $(u_k,v_k)$ as
$\varepsilon\to 0$ is given by
\begin{equation}
\label{E:AsymCayleyB}
\begin{split}
u_{k+1}&=u_k + \varepsilon A_1 +\varepsilon^2 A_2+\varepsilon^3A_3^{(\infty)}  +\mathcal{O}(\varepsilon^4), \\
v_{k+1}&=v_k  + \varepsilon B_1 +\varepsilon^2 B_2+\varepsilon^3B_3^{(\infty)}  +\mathcal{O}(\varepsilon^4),
\end{split}
\end{equation}
where $A_1, A_2, B_1, B_2$ are defined in  
\eqref{E:AsymAuxGeneral} and
 \begin{equation*}
\label{E:AsymAuxCayley}
\begin{split}
A_3^{(\infty)}&= -\frac{1}{4I_{11}^3I_{22}^2}(I_{13}u_k+I_{23}v_k)\left(3I_{13}I_{11}^2I_{23}u^3_k-9I_{11}I_{13}I_{22}I_{23}u_kv^2_k\right.\\
&\left.\quad\quad\quad\quad\quad\qquad\qquad\qquad +I_{11}(4I_{11}I_{23}^2-I_{11}I_{22}^2-5I_{13}^2I_{22})u^2_kv_k\right.\\
&\left.\quad\quad\quad\quad\quad\qquad\qquad\qquad +I_{22}(I_{22}I_{13}^2-I_{11}I_{22}^2-2I_{11}I_{23}^2)v^3_k\right), \\
B_3^{(\infty)}&=\frac{1}{4I_{11}^2I_{22}^3}(I_{13}u_k+I_{23}v_k)\left(3I_{23}I_{22}^2I_{13}v_k^3-9I_{11}I_{13}I_{22}I_{23}v_ku^2_k\right.\\
&\left.\quad\quad\quad\quad\quad\qquad\qquad\qquad +I_{22}(4I_{22}I_{13}^2-I_{22}I_{11}^2-5I_{23}^2I_{11})v_k^2u_k\right.\\
&\left.\quad\quad\quad\quad\quad\qquad\qquad\qquad +I_{11}(I_{11}I_{23}^2-I_{22}I_{11}^2-2I_{22}I_{13}^2)u_k^3\right). 
\end{split}
\end{equation*}

Analogous to \eqref{E:DefBstarMosVes}, we can  locally define the discrete
time momentum mapping on $\mathfrak{u}^{(\infty)}_\varepsilon$ by
\begin{equation*}
\label{E:DefBstarCayley}
\mathcal{B}_{(\infty,\varepsilon)}^*=\mathbb{F}\ell_d^{(\infty,\varepsilon)} \circ \mathcal{B}_{(\infty,\varepsilon)} \circ (\mathbb{F}\ell^{(\infty,\varepsilon)}_d)^{-1}.
\end{equation*} 
We will now  compute the order of local truncation error of the approximation of the continuous flow of \eqref{E:EulerPoincareDual}  by
the projection of  $\mathcal{B}_{(\infty ,\varepsilon)}^*$ onto the $M_1$-$M_2$ plane as we did with $\mathcal{B}_{(1 ,\varepsilon)}^*$ in \S\ref{S:AsympExpSuslovMoserVes}.

The inversion of equations \eqref{MParameInftya} and \eqref{MParameInftyb} in a neighbourhood  of $(M_1,M_2)=(0,0)$ yields
\begin{equation}
\label{E:InvLegCayley}
\begin{split}
u&=\frac{M_1}{I_{11}}+\varepsilon F_1 +\varepsilon^2 F_2^{(\infty)} + \varepsilon^3 F_3^{(\infty)} +  \mathcal{O}(\varepsilon^4), \\
v&=\frac{M_2}{I_{22}}+\varepsilon G_1 +\varepsilon^2 G_2^{(\infty)} + \varepsilon^3 G_3^{(\infty)} +  \mathcal{O}(\varepsilon^4),
\end{split}
\end{equation}
where $F_1$, $G_1$ are given by \eqref{E:InvLegAuxGeneral},
and
\begin{equation*}
\label{E:InvLegAuxMosVes}
\begin{split}
F_2^{(\infty)}&=-\frac{1}{2I_{11}^3I_{22}^3}(I_{13}I_{22}M_1+I_{11}I_{23}M_2)(I_{13}I_{22}M_1^2+2I_{11}I_{23}M_1M_2-I_{11}I_{13}M_2^2),  \\
F_3^{(\infty)}& = \frac{-1}{4I_{11}^4I_{22}^5}\left(2I_{13}^2I_{22}^3I_{23}M_1^4
-I_{11}I_{22}I_{23}(2I_{11}^2I_{22}-3I_{11}I_{23}^2+12I_{13}^2I_{22})M_1^2M_2^2\right.\\
&\left.\qquad\qquad\quad\quad -I_{13}I_{22}(I_{11}^2I_{22}^2+I_{11}I_{22}^3-5I_{11}I_{22}I_{23}^2+4I_{13}^2I_{22}^2)M_1^3M_2 \right.\\
&\left.\qquad\qquad\quad\quad-I_{11}I_{13}I_{22}(I_{11}^2I_{22}+I_{11}I_{22}^2+10I_{11}I_{23}^2-I_{13}^2I_{22})M_1M_2^3\right.\\
&\left.\qquad\qquad\quad\quad-I_{11}I_{23}(2I_{11}^3I_{22}+2I_{11}^2I_{23}^2-I_{11}I_{13}^2I_{22})M_2^4\right), \\
G_2^{(\infty)}&= \frac{-1}{2I_{11}^3I_{22}^3}(I_{13}I_{22}M_1+I_{11}I_{23}M_2)(I_{23}I_{11}M_2^2+2I_{22}I_{13}M_1M_2-I_{22}I_{23}M_2^2),  \\
G_3^{(\infty)}& =\frac{1}{4I_{11}^5I_{22}^4}\left(2I_{23}^2I_{11}^3I_{13}M_2^4 -I_{22}I_{11}I_{13}(2I_{22}^2I_{11}-3I_{22}I_{13}^2+12I_{23}^2I_{11})M_2^2M_1^2\right.\\
&\left.\qquad\qquad\quad\quad -I_{23}I_{11}(I_{22}^2I_{11}^2+I_{22}I_{11}^3-5I_{22}I_{11}I_{13}^2+4I_{23}^2I_{11}^2)M_2^3M_1 \right.\\
&\left.\qquad\qquad\quad\quad-I_{22}I_{23}I_{11}(I_{22}^2I_{11}+I_{22}I_{11}^2+10I_{22}I_{13}^2-I_{23}^2I_{11})M_2M_1^3\right.\\
&\left.\qquad\qquad\quad\quad-I_{22}I_{13}(2I_{22}^3I_{11}+2I_{22}^2I_{13}^2-I_{22}I_{23}^2I_{11})M_1^4\right).
\end{split}
\end{equation*}

An asymptotic expansion for the projection of  $\mathcal{B}_{(\infty,\varepsilon)}^*$ onto the plane
$M_1\, M_2$ is obtained using 
\eqref{E:AsymCayleyB}, \eqref{E:InvLegCayley} and  \eqref{MParameInfty}. Denoting $((M_1)_{k+1}, (M_2)_{k+1})=(\tilde M_1, \tilde M_2)$
and $((M_1)_{k}, (M_2)_{k})=(M_1,M_2)$, we have
\begin{equation}
\label{E:ExpansionMCayley}
\begin{split}
\tilde M_1&=M_1 + \varepsilon \mu_1 + \varepsilon^2 \mu_2 +  \varepsilon^3 \mu_3^{(\infty)}+\mathcal{O}(\varepsilon^4),  \\
\tilde M_2&=M_2 + \varepsilon \nu_1 + \varepsilon^2 \nu_2 +  \varepsilon^3 \nu_3^{(\infty)}+\mathcal{O}(\varepsilon^4),
\end{split}
\end{equation}
where $\mu_1, \mu_2, \nu_1, \nu_2,$ are given by \eqref{E:ExpansionMAuxGeneral}, and
\begin{equation}
\label{E:ExpansionMAuxCayley}
\begin{split}
 &\mu_3^{(\infty)}=\frac{-1}{4 I_{11}^4I_{22}^5}\left(  2I_{13}^2I_{22}^3I_{23}M_1^4
 +I_{11}I_{22}I_{23}(3I_{11}I_{23}^2-2I_{11}^2I_{22}-12I_{13}^2I_{22})M_1^2M_2^2
 \right.\\
&\left. -I_{11}I_{22}(I_{11}I_{13}I_{22}^2+I_{13}I_{22}^3-5I_{13}I_{22}I_{23}^2+4I_{11}^2I_{22}^2)M_1^3M_2+I_{11}I_{13}I_{22}(I_{13}^2I_{22}\right.\\
&\left.-I_{11}^2I_{22} -I_{11}I_{22}^2-10I_{11}I_{23}^2)M_1M_2^3-I_{11}I_{23}(2I_{11}^3I_{22}+2I_{11}^2I_{23}^2-I_{11}I_{13}^2I_{22})M_2^4\right),  \\
 & \nu_3^{(\infty)}=\frac{1}{4 I_{11}^5I_{22}^4}\left(  2I_{23}^2I_{11}^3I_{13}M_2^4+I_{22}I_{11}I_{13}(3I_{22}I_{13}^2-2I_{22}^2I_{11}-12I_{23}^2I_{11})M_2^2M_1^2
\right.\\
&\left.  -I_{22}I_{11}(I_{22}I_{23}I_{11}^2+I_{23}I_{11}^3-5I_{23}I_{11}I_{13}^2+4I_{22}^2I_{11}^2)M_2^3M_1+I_{22}I_{23}I_{11}(I_{23}^2I_{11}\right.\\
&\left.-I_{22}^2I_{11} -I_{22}I_{11}^2-10I_{22}I_{13}^2)M_2M_1^3-I_{22}I_{13}(2I_{22}^3I_{11}+2I_{22}^2I_{13}^2-I_{22}I_{23}^2I_{11})M_1^4\right). 
\end{split}
\end{equation}
Expansions \eqref{E:ExpansionMCayley} are valid in a vicinity of $(M_1,M_2)=(0,0)$ and for a small $\varepsilon>0$.

Comparing \eqref{E:ExpansionMcont} and \eqref{E:ExpansionMAuxCont}, with \eqref{E:ExpansionMCayley} and \eqref{E:ExpansionMAuxCayley},
we conclude the following.
\begin{theorem}
\label{T:AccuracyCayley}
The local truncation error of the approximation of the flow of the Euler-Poincar\'e-Suslov equations
 \eqref{E:EulerPoincareDual} by the  projection of the discrete time momentum mapping $\mathcal{B}_{(\infty,\varepsilon)}^*$  onto the 
$M_1$-$M_2$ plane is  second order.
\end{theorem}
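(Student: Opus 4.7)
The strategy is a direct parallel to the proof of Theorem \ref{T:AccuracyMosVes}, except that the discrete Legendre transform \eqref{E:DiscLegTrans2} and the implicit scheme \eqref{E:Alt-scheme-full} play the roles of \eqref{E:DiscLegTrans1} and \eqref{E:F-Z-scheme-full}. The plan is to construct an asymptotic expansion of $\mathcal{B}^*_{(\infty,\varepsilon)}$ projected onto the $M_1$-$M_2$ plane and compare it, term by term in powers of $\varepsilon$, with the Taylor expansion \eqref{E:ExpansionMcont} of the continuous flow. Establishing that the $\varepsilon^0$, $\varepsilon^1$ and $\varepsilon^2$ coefficients agree while the $\varepsilon^3$ ones generically do not yields the claimed second-order local truncation error.

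The proof proceeds in three computational steps that are then composed. First, I would apply the implicit function theorem to the system \eqref{E:Alt-scheme-full} to show that, for small enough $\varepsilon$, there is a unique real branch $(u_{k+1},v_{k+1})$ depending smoothly on $(u_k,v_k,\varepsilon)$ and tending to $(u_k,v_k)$ as $\varepsilon\to 0$; then I would expand this branch as a power series in $\varepsilon$ about $(u_k,v_k)$, obtaining \eqref{E:AsymCayleyB} with coefficients $A_1,A_2,A_3^{(\infty)},B_1,B_2,B_3^{(\infty)}$. Second, I would invert the two scalar equations \eqref{MParameInftya}--\eqref{MParameInftyb} in a neighbourhood of the origin, with $\varepsilon$ playing the role of a parameter, yielding the expansion \eqref{E:InvLegCayley}. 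Third, I would use \eqref{MParameInftya}--\eqref{MParameInftyb} themselves to push $(u_{k+1},v_{k+1})$ forward to $(\tilde M_1,\tilde M_2)$, and compose the three expansions to obtain \eqref{E:ExpansionMCayley} with explicit formulas for $\mu_3^{(\infty)}$, $\nu_3^{(\infty)}$ in \eqref{E:ExpansionMAuxCayley}.

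The final step is to compare \eqref{E:ExpansionMCayley} with \eqref{E:ExpansionMcont}. The zeroth-order terms trivially agree. The key observation is that the first- and second-order coefficients must coincide with $\mu_1,\nu_1,\mu_2,\nu_2$ in \eqref{E:ExpansionMAuxGeneral}: this is automatic by consistency, since $\mathcal{B}^*_{(\infty,\varepsilon)}$ approximates the continuous flow and the leading error sits at order $\varepsilon^3$. Finally, since $\mu_3^{(\infty)}\neq \mu_3^{(C)}$ and $\nu_3^{(\infty)}\neq \nu_3^{(C)}$ for generic inertia tensors, the local truncation error is exactly of order $\varepsilon^3$, which corresponds to a second-order method.

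The main obstacle is the bookkeeping of Taylor coefficients, in particular verifying the cancellations that force the $\varepsilon^2$ terms of the composite expansion to match $\mu_2,\nu_2$ of \eqref{E:ExpansionMAuxGeneral}. This cancellation is the computational crux: the first-order coefficients from \eqref{E:AsymCayleyB}, \eqref{E:InvLegCayley} and \eqref{MParameInfty} combine nontrivially, and the partial sums from each of the three expansions must conspire to reproduce the continuous $\mu_2,\nu_2$ despite the fact that the consistent discretisation employs a different reduced Lagrangian $\ell_d^{(\infty,\varepsilon)}$ than the one underlying $\mathcal{B}^*_{(1,\varepsilon)}$. Once this is verified by symbolic computation, the theorem follows immediately from the comparison of the $\varepsilon^3$ terms.
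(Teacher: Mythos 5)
Your proposal follows essentially the same route as the paper: compose the three expansions \eqref{E:AsymCayleyB}, \eqref{E:InvLegCayley} and \eqref{MParameInfty} to obtain \eqref{E:ExpansionMCayley}, then compare coefficients with \eqref{E:ExpansionMcont}, finding agreement through order $\varepsilon^2$ and disagreement at order $\varepsilon^3$. One caution: the claim that the order-$\varepsilon$ and order-$\varepsilon^2$ agreement is ``automatic by consistency'' is circular (and runs against the paper's main point, since the non-consistent scheme achieves the same order); as you yourself note in the final paragraph, this agreement must be verified by the explicit symbolic computation, which is exactly what the paper does.
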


Theorems  \ref{T:AccuracyMosVes} and \ref{T:AccuracyCayley} show the local truncation error for both discretisations is of the same order. This indicates that consistency of the discretisation of a nonholonomic geometric integrator is not the essential feature
to guarantee an approximation of the continuous flow within  with a desired order of accuracy. This seems to contradict the Remark 3.1 in \cite{CoMa} quoted in the introduction.

\section{Discrete evolution of the energy } \label{S:Energy}

Here we discuss the energy preservation properties of both discretisations.

\subsection{Non-consistent  discretisation} \label{S:EnergyMosVes}

It is shown by Fedorov-Zenkov \cite{FeZen} that the  discretisation of the Suslov problem defined by $\Ese$ and $\ell_d^{(1,\varepsilon)}$ exactly preserves the restriction
of the constrained energy $E_c$ defined by \eqref{E:EnergySuslov} to the momentum
locus $\mathfrak{u}_\varepsilon^{(1)}$.
At the group level,  this conservation law  is formulated in the following.
\begin{proposition}\label{RationalFunction}
The discrete system \eqref{E:F-Z-scheme-full} evolves on the level sets of the rational function
\begin{equation*}
R(u,v)=\frac{4(I_{11}u^2+I_{22}v^2)(4I_{11}I_{22}+\varepsilon^2(I_{13}u+I_{23}v)^2)}{(4+\varepsilon^2(u^2+v^2))^2}.
\end{equation*}
\end{proposition}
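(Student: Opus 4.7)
The plan is to realise $R(u,v)$ as the pullback of the continuous energy $E_c$ by the discrete Legendre transformation and then to observe that the scheme \eqref{E:F-Z-scheme-full} can be written as a single equation involving the parametrisation $(u,v)\mapsto (M_1,M_2)$ of \eqref{MParamea}--\eqref{MParameb}, together with its $\varepsilon\mapsto-\varepsilon$ counterpart.

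First I would introduce the shorthand $M_i(u,v;\varepsilon)$, $i=1,2$, for the right-hand sides of \eqref{MParamea} and \eqref{MParameb}, and denote by $M_i^-(u,v):=M_i(u,v;-\varepsilon)$ the same expressions with $\varepsilon$ replaced by $-\varepsilon$. Inspection shows that the scheme \eqref{E:F-Z-scheme-full} amounts exactly to the two identities
\begin{equation*}
M_1(u_{k+1},v_{k+1};\varepsilon)=M_1^-(u_k,v_k),\qquad M_2(u_{k+1},v_{k+1};\varepsilon)=M_2^-(u_k,v_k).
\end{equation*}

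Next I would compute the combination $I_{22}M_1^2+I_{11}M_2^2$ directly from \eqref{MParamea}--\eqref{MParameb}. Setting $L:=I_{13}u+I_{23}v$ and expanding, the cross terms $\pm 4\varepsilon I_{11}I_{22}uvL$ coming from the two squares cancel, and the remaining pieces factorise neatly to yield
\begin{equation*}
I_{22}M_1(u,v;\varepsilon)^2+I_{11}M_2(u,v;\varepsilon)^2=\frac{4(I_{11}u^2+I_{22}v^2)\bigl(4I_{11}I_{22}+\varepsilon^2 L^2\bigr)}{(4+\varepsilon^2(u^2+v^2))^2}=R(u,v).
\end{equation*}
Using the definition \eqref{E:EnergySuslov}, this is the identity $R(u,v)=2I_{11}I_{22}\,E_c(M_1(u,v;\varepsilon),M_2(u,v;\varepsilon))$. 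A crucial by-product, which is the core of the whole argument, is that $R(u,v)$ depends on $\varepsilon$ only through $\varepsilon^2$, so the same identity also holds with $M_i$ replaced by $M_i^-$.

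Finally, I would chain these observations to conclude: starting from the definition of $R(u_{k+1},v_{k+1})$, writing it through the pullback as $2I_{11}I_{22}\,E_c(M_1(u_{k+1},v_{k+1};\varepsilon),M_2(u_{k+1},v_{k+1};\varepsilon))$, using the scheme to replace this by $2I_{11}I_{22}\,E_c(M_1^-(u_k,v_k),M_2^-(u_k,v_k))$, and finally invoking the $\varepsilon$-parity of $R$ to identify the last expression with $R(u_k,v_k)$. This gives the desired conservation. There is no real obstacle beyond carrying out the algebraic expansion in the second step carefully; the cancellation of the cross terms is the small computation on which everything hinges.
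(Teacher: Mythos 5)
Your proposal is correct and is essentially the paper's own proof: the paper likewise verifies that $I_{22}M_1^2+I_{11}M_2^2$ evaluated via \eqref{MParamea}--\eqref{MParameb} equals $R$, substitutes the scheme \eqref{E:F-Z-scheme-full}, and uses the fact that the sign-flipped expressions give the same value of $R$ (your $\varepsilon\mapsto-\varepsilon$ parity observation, which the paper leaves implicit in the final line ``but the expression on the right hand side equals $R(u_k,v_k)$''). Your framing of $R$ as the pullback of $E_c$ by the discrete Legendre transformation matches the remark the paper makes immediately after the proposition.
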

\begin{proof}
A direct calculation shows that
\begin{equation*}
\label{E:RatFcnSuslov}
\begin{split}
 R(u_{k+1},v_{k+1})=
&\,\,\,\,\,\,I_{22}\left (\frac{4I_{11}u_{k+1}+2\varepsilon v_{k+1}(I_{13}u_{k+1}+I_{23}v_{k+1})}{4+\varepsilon^2(u_{k+1}^2+v_{k+1}^2)}  \right )^2\\
& +I_{11}\left (\frac{4I_{22}v_{k+1}-2\varepsilon u_{k+1}(I_{23}v_{k+1}+I_{13}u_{k+1})}{4+\varepsilon^2(u_{k+1}^2+v_{k+1}^2)}   \right )^2 .
\end{split}
\end{equation*}
So, using  \eqref{E:F-Z-scheme-full}, we get
\begin{equation*}
\begin{split}
R(u_{k+1},v_{k+1})&=I_{22}\left (\frac{4I_{11}u_{k}-2\varepsilon v_{k}(I_{13}u_{k}+I_{23}v_{k})}{4+\varepsilon^2(u_{k}^2+v_{k}^2)}  \right )^2\\
& +I_{11}\left (\frac{4I_{22}v_{k}+2\varepsilon u_{k}(I_{23}v_{k}+I_{13}u_{k})}{4+\varepsilon^2(u_{k}^2+v_{k}^2)}   \right )^2,
\end{split}
\end{equation*}
but the expression on the right hand side equals $R(u_{k},v_{k})$.
\end{proof}

It is important to notice that {\em all}  branches of the multi-valued discrete map defined by  \eqref{E:F-Z-scheme-full}  possess this
invariant.

The relationship between the rational function $R$ and  $E_c$ is established by noticing that, in view of
\eqref{MParamea}, \eqref{MParameb}, the relation $R(u_{k+1},v_{k+1})=R(u_{k},v_{k})$ can be rewritten as
\begin{equation*}
I_{22}(M_1)_{k+1}^2+I_{11} (M_2)_{k+1}^2 =I_{22}(M_1)_{k}^2+I_{11} (M_2)_{k}^2.
\end{equation*}
But, up to multiplication by a constant factor,  $I_{22}M_1^2+I_{11} M_2^2$ coincides with $E_c$.

The proposition above shows that the discrete map defined by  \eqref{E:F-Z-scheme-full} evolves on  
the algebraic curve
\begin{equation*}
\{(u,v)\in \mathbb{C}^2 : 4(I_{11}u^2+I_{22}v^2)(4I_{11}I_{22}+\varepsilon^2 (I_{13}u+I_{23}v)^2) -h(4+\varepsilon^2(u^2+v^2))^2=0\}
\end{equation*}
where $h=R(u_0,v_0)$.
The MAPLE package ``algcurves"  indicates that the compactification of $\mathcal{C}$ has genus 3 and is not hyperelliptic.

\subsection{Consistent discretisation}
\label{S:EnergyCayley}

The consistent discretisation defined by $\Ese$ and  $\ell_d^{(\infty, \varepsilon)}$ only preserves the 
constrained energy $E_c$ defined by \eqref{E:EnergySuslov} 
if $I_{11}=I_{22}$. One can show that   this condition implies that  the  
  axis of forbidden rotations of the body  $e_3$ is contained in an inertia eigen-plane.\footnote{Such condition also leads to simplifications in the properties of the continuous time 
  Suslov problem \cite{FeMaPr}, \cite{GNetal}.}

At the group level,   we have  the following.

\begin{proposition}\label{P:ConsEnergyCayley} Let $Q$ be the following polynomial in two variables
\begin{equation*}
\begin{split}
Q(u,v)&= \frac{1}{16}(I_{11}u^2+I_{22}v^2) \left ( 16 I_{11}I_{22} + \varepsilon^2 \left (8  I_{11}I_{22}(u^2+v^2) + 4(I_{13}u+I_{23}v)^2 \right )  \right . \\
& \left . \qquad  +4 \varepsilon^3  uv(I_{22}-I_{11})(I_{13}u+I_{23}v )   + \varepsilon^4(I_{11} u^2+I_{22}v^2)(I_{22}u^2+I_{11}v^2) \right ).
\end{split}
\end{equation*}
\begin{enumerate}
\item  If $I_{11}=I_{22}$ then 
the discrete system \eqref{E:Alt-scheme-full} evolves on the level sets of $Q$.
\item If $I_{11}\neq I_{22}$ then for successive points defined by   \eqref{E:Alt-scheme-full}, we have
\begin{equation*}
Q(u_{k+1},v_{k+1})-Q(u_{k},v_{k})=\mathcal{O}(\varepsilon^3).
\end{equation*}
\end{enumerate}
\end{proposition}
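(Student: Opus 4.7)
The plan is to mirror the strategy of Proposition \ref{RationalFunction}, exploiting the fact that the polynomial $Q$ should be (up to a harmless factor) the discrete counterpart of the constrained energy $E_c$ when expressed in the coordinates $(u,v)$. Recall from \eqref{E:EnergySuslov} that $2I_{11}I_{22}E_c=I_{22}M_1^2+I_{11}M_2^2$. Combined with \eqref{MParameInftya}, \eqref{MParameInftyb} this suggests the identity
\begin{equation*}
Q(u,v)=I_{22}\bigl[p_3^+(u,v)\bigr]^2+I_{11}\bigl[q_3^-(u,v)\bigr]^2,
\end{equation*}
which is my first step: expand the right-hand side directly using the definitions of $p_3^+,q_3^-$ and collect terms in powers of $\varepsilon$, checking that the result matches the explicit formula for $Q$ given in the proposition. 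The $\varepsilon^0$ and $\varepsilon^2$ contributions assemble into $(I_{11}u^2+I_{22}v^2)\cdot[I_{11}I_{22}+\tfrac{\varepsilon^2}{4}((I_{13}u+I_{23}v)^2+2I_{11}I_{22}(u^2+v^2))/\ldots]$, the $\varepsilon^3$ cross-terms factor out $(I_{22}-I_{11})uv(I_{13}u+I_{23}v)$, and the $\varepsilon^4$ term is $\tfrac{1}{16}(I_{11}u^2+I_{22}v^2)^2(I_{22}u^2+I_{11}v^2)$.

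With that identification in hand, the scheme \eqref{E:Alt-scheme-full} tells us that $p_3^+(u_{k+1},v_{k+1})=p_3^-(u_k,v_k)$ and $q_3^-(u_{k+1},v_{k+1})=q_3^+(u_k,v_k)$, so
\begin{equation*}
Q(u_{k+1},v_{k+1})=I_{22}\bigl[p_3^-(u_k,v_k)\bigr]^2+I_{11}\bigl[q_3^+(u_k,v_k)\bigr]^2.
\end{equation*}
Therefore the increment in $Q$ along a trajectory is
\begin{equation*}
\Delta Q:=Q(u_{k+1},v_{k+1})-Q(u_k,v_k)=\bigl(I_{22}[p_3^-]^2+I_{11}[q_3^+]^2-I_{22}[p_3^+]^2-I_{11}[q_3^-]^2\bigr)\big|_{(u_k,v_k)}.
\end{equation*}
The second step is to simplify this difference. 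Using the identities $p_3^\pm=P\pm\varepsilon R_1$, $q_3^\pm=\bar P\pm\varepsilon R_2$ with $P=I_{11}u+\tfrac{\varepsilon^2}{4}u(I_{11}u^2+I_{22}v^2)$, $\bar P=I_{22}v+\tfrac{\varepsilon^2}{4}v(I_{11}u^2+I_{22}v^2)$, $R_1=\tfrac{1}{2}v(I_{13}u+I_{23}v)$, $R_2=-\tfrac{1}{2}u(I_{13}u+I_{23}v)$, the quadratic terms $P^2, \bar P^2, R_1^2, R_2^2$ cancel in the difference, leaving only the mixed terms $\mp 2\varepsilon P R_1$ and $\mp 2\varepsilon \bar P R_2$. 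A short calculation then yields
\begin{equation*}
\Delta Q=-\tfrac{\varepsilon^3}{2}\,u_k v_k\,(I_{13}u_k+I_{23}v_k)(I_{11}u_k^2+I_{22}v_k^2)(I_{22}-I_{11}).
\end{equation*}

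Part~(1) is immediate from this formula: if $I_{11}=I_{22}$ then $\Delta Q=0$. Part~(2) is likewise immediate since the right-hand side is manifestly of order $\varepsilon^3$. I do not anticipate any genuine obstacle: the computation is entirely algebraic, and the only mildly delicate point is the careful bookkeeping in the initial expansion that identifies $Q$ with $I_{22}[p_3^+]^2+I_{11}[q_3^-]^2$; once that identification is in place, the symmetry between the $\pm$ polynomials forces the energy defect to be proportional to $(I_{22}-I_{11})$, which is the whole point of the statement.
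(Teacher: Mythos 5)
Your argument is correct and is essentially the paper's own proof: both rest on verifying the exact identity $Q=I_{22}\bigl[p_3^+\bigr]^2+I_{11}\bigl[q_3^-\bigr]^2$, transporting it through \eqref{E:Alt-scheme-full}, and computing the resulting defect $\tfrac{\varepsilon^3}{2}u_kv_k(I_{11}-I_{22})(I_{11}u_k^2+I_{22}v_k^2)(I_{13}u_k+I_{23}v_k)$, which matches your final formula; your decomposition $p_3^\pm=P\pm\varepsilon R_1$, $q_3^\pm=\bar P\pm\varepsilon R_2$ merely makes explicit the cancellation the paper leaves to a ``direct calculation''. The only blemish is notational: with $R_2=-\tfrac{1}{2}u(I_{13}u+I_{23}v)$ the formula $q_3^\pm=\bar P\pm\varepsilon R_2$ swaps the labels of $q_3^+$ and $q_3^-$ relative to their definition after \eqref{E:Alt-scheme-full} (you need $R_2=+\tfrac{1}{2}u(I_{13}u+I_{23}v)$), but your end result shows you used the correct convention in the actual computation.
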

\begin{proof} 

A direct calculation shows that 
\begin{equation*}
I_{22}p_3^+(u_{k+1},v_{k+1})^2+I_{11}q_3^-(u_{k+1},v_{k+1})^2 = Q(u_{k+1},v_{k+1}),
\end{equation*}
and also
\begin{equation*}
\begin{split}
&I_{22} p_3^-(u_{k},v_{k})^2+I_{11}q_3^+(u_{k},v_{k})^2 = Q(u_{k},v_{k}) + \\  & \qquad \qquad \qquad \qquad\frac{\varepsilon^3u_kv_k}{2}(I_{11}-I_{22})(I_{11}u_k^2+I_{22}v_k^2)(I_{13}u_k+I_{23}v_k).
\end{split}
\end{equation*}
It follows from \eqref{E:Alt-scheme-full} that
\begin{equation*}
Q(u_{k+1},v_{k+1})=Q(u_{k},v_{k})+ \frac{\varepsilon^3u_kv_k}{2}(I_{11}-I_{22})(I_{11}u_k^2+I_{22}v_k^2)(I_{13}u_k+I_{23}v_k).
\end{equation*}

\end{proof}

Note that this proposition applies to all   branches of the multi-valued discrete map defined by  \eqref{E:Alt-scheme-full}.
In particular, if $I_{11}=I_{22}$, all 5 branches preserve $Q$. Given that in this case $e_3$ lies on an inertia eigen-plane, by performing 
a rotation of the body frame about $e_3$, we may assume
that $I_{13}$ vanishes and $Q$ takes the simplified form
\begin{equation*}
Q(u,v)=\frac{I_{11}}{16}(u^2+v^2)\left ( 16I_{11}^2 +\varepsilon^2( 8I_{11}^2(u^2+v^2) +4I_{23}^2v^2) + \varepsilon^4I_{11}^2(u^2+v^2)^2 \right ).
\end{equation*}
Hence, the multi-valued map \eqref{E:Alt-scheme-full} evolves on the algebraic curve 
\begin{equation*}
\mathcal{C}=\{(u,v)\in \mathbb{C}^2 \, : \, Q(u,v)-h=0\},
\end{equation*}
where the constant $h=Q(u_0,v_0)$. According to the MAPLE package ``algcurves" the compactification of $\mathcal{C}$ has genus 4 and is not hyperelliptic.

The relationship between the polynomial $Q$ and the constrained energy $ E_c$ given by  \eqref{E:EnergySuslov}  is established by noting that
in view of \eqref{MParameInftya} and \eqref{MParameInftyb} we have
\begin{equation*}
Q(u_{k},v_{k})=I_{22}(M_1)_{k}^2+I_{11} (M_2)_{k}^2,
\end{equation*}
which, up to multiplication by a constant factor,  coincides with $E_c$.

\begin{remark}
 If $I_{11}\neq I_{22}$, the proposition states that the approximation of $E_c$ by the discrete flow is  
$\mathcal{O}(\varepsilon^3)$ which is considerably good. One could imagine that in this case there still exists an energy-like 
first integral of the discrete system, for instance an $\mathcal{O}(\varepsilon)$-perturbation of $E_c$. We were unable to prove or disprove this possibility
that was brought to our attention by one of the anonymous referees of the paper.
\end{remark}

\begin{remark}
Note that in the energy-preserving cases mentioned above, the discrete flow at a fixed energy value defines a multi-valued map on an algebraic curve. Such curve
has symmetries and is a covering of another curve with lower genus. As  future work, it is interesting to  investigate if an exact analytical expression for the $n^{th}$ iterate
of these maps can be obtained in any of these curves.
\end{remark}

\section{Numerical simulations}\label{Simulations}

In this section we compare  the performance of the integrators $\mathcal{B}_{(1,\varepsilon)}^*$ and $\mathcal{B}_{(\infty,\varepsilon)}^*$
by conducting    numerical experiments. We consider two different choices of the inertia tensor. The first one is generic while the second one
satisfies the condition $I_{11}=I_{22}$ that ensures that $\mathcal{B}_{(\infty,\varepsilon)}^*$ exactly preserves $E_c$. In both cases we 
observe that the  approximation by the non-consistent integrator $\mathcal{B}_{(1,\varepsilon)}^*$ is slightly better.

\subsection{A generic inertia tensor}
Throughout this section we suppose that the inertia tensor has
\begin{equation*}
I_{11}=3, \qquad I_{22}=4, \qquad I_{33}=5, \qquad I_{13}=1, \qquad I_{23}=\frac{1}{2}.
\end{equation*}
We will approximate the solution $(M_1(t), M_2(t))$ to the Euler-Poincar\'e-Suslov equations \eqref{E:EulerPoincareDual} having initial condition
\begin{equation*}
\label{E:initcondGeneric}
M_1(0)=41.07400078, \qquad M_2(0)=-99.38251558,
\end{equation*}
over the time interval $0\leq t\leq 1$. Such solution can be obtained analytically, see e.g. \cite{FeMaPr}. This choice of initial condition is such that $(M_1(t),M_2(t))$ 
will evolve from one asymptotic state to another 
over the time interval $0\leq t \leq 1$. 

Our numerical experiments show that the momentum integrator $\mathcal{B}_{(1,\varepsilon)}^*$ corresponding to the non-compatible discretisation 
defined by $\Ese$ and  $\ell_d^{(1,\varepsilon)}$ approximates $(M_1(t), M_2(t))$ for $0\leq t\leq 1$  if the 
the time step $\varepsilon  \leq 0.3$. 
For $\varepsilon\geq 0.4$ we cannot generate sufficient iterations 
for the approximation. The problem arises since  the map
$(\F\ell_d^{(1,\varepsilon)})^{-1}$ that appears in   \eqref{E:DefBstarMosVes} only
exists locally.  We did not find this type of restriction on the time step for the momentum integrator $\mathcal{B}_{(\infty,\varepsilon)}^*$ corresponding to the 
consistent discretisation. In Figure   \ref{F:Generic} below we show approximations of $(M_1(t),M_2(t))$ by both integrators for $\varepsilon= 0.015, 0.030$. On the insets we show respectively
the  approximation of the constrained energy $E_c$  and the signed distance $\rho$   to the constraint subspace $\de^*$ given by 
\eqref{E:defdestarSuslov} at each time step. For the latter graph it is necessary to compute the iterates of all components of the vector $M$.

\begin{figure}[h!]
\centering
\includegraphics[width=15cm]{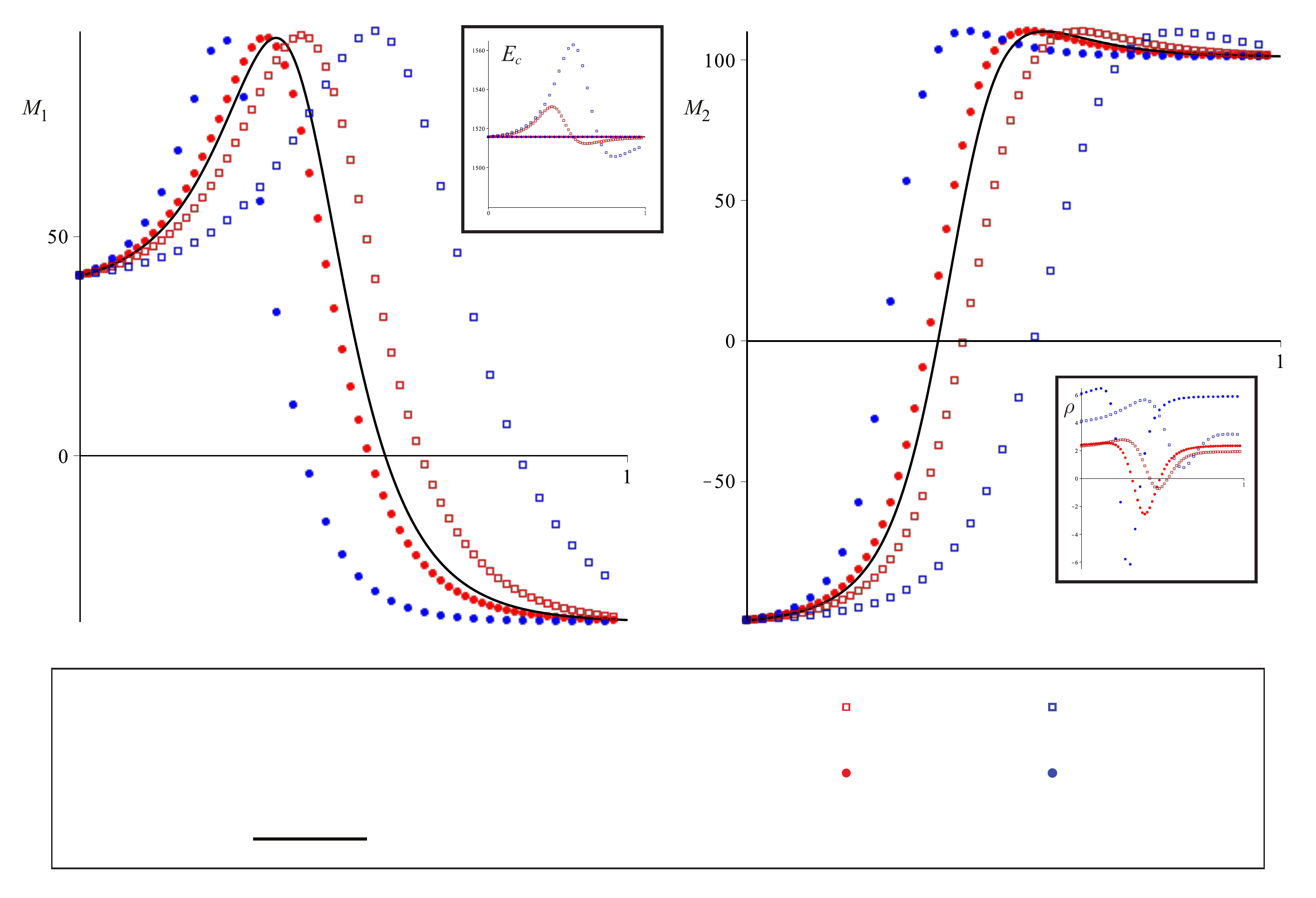}
 \put (-143,60) {$\varepsilon=0.015$} \put (-76,60) {$\varepsilon=0.030$} \put (-143,38) {$\varepsilon=0.015$}  \put (-76,38) {$\varepsilon=0.030$}
  \put (-400,60){Consistent approximation defined by $\ell_d^{(\infty,\varepsilon)} $}
    \put (-400,38){Non-consistent approximation defined by $\ell_d^{(1,\varepsilon)} $ }
     \put (-400,17){Exact}
%        \put (45,31.5){\tiny{$t$}}  \put (95,40.3){\tiny{$t$}} \put (48,58.3){\tiny{$t$}} \put (93,30.5){\tiny{$t$}}
\caption{{\small Comparison between  the  two discretisations  for a generic inertia tensor. The graph on the left shows the  approximation of $M_1$ and the evolution of the energy $E_c$ 
(inset). The graph on the right shows the  approximation of $M_2$ and the evolution of the signed distance $\rho$ to the constraint subspace $\de^*$ (inset).
}}
\label{F:Generic}
\end{figure}

\subsection{A special inertia tensor}
Now we consider an inertia tensor having 
\begin{equation*}
I_{11}=3, \qquad I_{22}=3, \qquad I_{33}=5, \qquad I_{13}=0, \qquad I_{23}=\frac{1}{2},
\end{equation*}
and we approximate the solution to the Euler-Poincar\'e-Suslov equations \eqref{E:EulerPoincareDual} having initial condition
\begin{equation*}
\label{E:initcondSpecial}
M_1(0)=179.9836568, \qquad M_2(0)=2.4255507998,
\end{equation*}
Once again, the choice of initial condition is such that the solution will evolve from one asymptotic state to another 
over the time interval $0\leq t \leq 1$. 

This time the non-consistent  momentum integrator $\mathcal{B}_{(1,\varepsilon)}^*$ 
has problems if    $\varepsilon\geq 0.018$.
Figure \ref{F:Special} below is analogous to Figure \ref{F:Generic} in the previous section for $\varepsilon= 0.007, 0.014$.

\begin{figure}[h!]
\centering
\includegraphics[width=15cm]{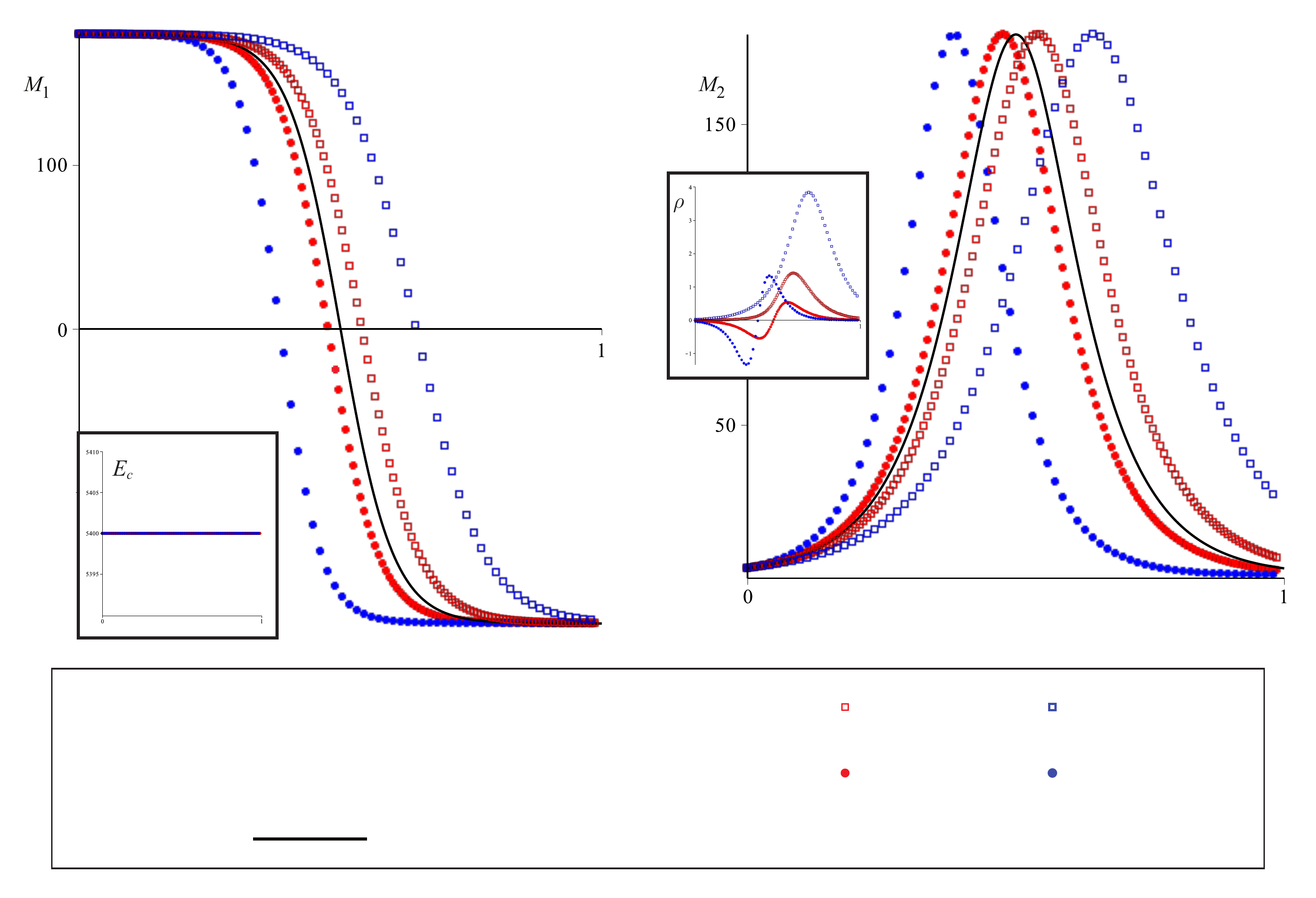}
 \put (-143,60) {$\varepsilon=0.007$} \put (-76,60) {$\varepsilon=0.014$} \put (-143,38) {$\varepsilon=0.007$}  \put (-76,38) {$\varepsilon=0.014$}
  \put (-400,60){Consistent approximation defined by $\ell_d^{(\infty,\varepsilon)} $}
    \put (-400,38){Non-consistent approximation defined by $\ell_d^{(1,\varepsilon)} $ }
     \put (-400,17){Exact}
     %        \put (18,20.3){\tiny{$t$}}  \put (44,41.8){\tiny{$t$}} \put (95,22.8){\tiny{$t$}} \put (64.2,42.3){\tiny{$t$}}
\caption{{\small Comparison between of the  two discretisations  for a special inertia tensor having $I_{11}=I_{22}$. The graph on the left shows the  approximation of $M_1$ and the evolution of the energy $E_c$ 
(inset). The graph on the right shows the  approximation of $M_2$ and the evolution of the signed distance $\rho$ to the constraint subspace $\de^*$ (inset).
}}
\label{F:Special}
\end{figure}

\begin{remark}
{\rm Note that the numerical experiments  show that the evolution of the non-consistent discrete map $\mathcal{B}_{(1,\varepsilon)}^*$ is slower than the continuous 
dynamics while the evolution of the consistent discrete map $\mathcal{B}_{(\infty,\varepsilon)}^*$ is faster. We have no explanation for this phenomenon.}
\end{remark}

\section{Conclusions}
\label{S:Conclusions}

To our knowledge, this work is the first in which the 
usefulness  of the consistency   condition of a discretisation of a nonholonomic system is explored.  
Our results indicate that    consistency may not
be the essential property to consider in order   to construct nonholonomic integrators. Indeed, for the specific discretisations
of the Suslov problem that we considered, we have shown that
\begin{enumerate}
\item Both the consistent and the
non-consistent discretisations  approximate the continuous flow of the system with local truncation errors of the same order.
\item The non-consistent discretisation  preserves the energy of the continuous system for an arbitrary inertia tensor. On the other hand, 
the consistent discretisation only preserves the energy of the continuous system for a  family of rigid bodies whose inertia tensor has a specific type of symmetry, and it is unclear
if an energy-like first integral exists in the general case.
\item Our numerical experiments indicate that the  non-consistent discretisation gives rise to an integrator that performs better than the consistent one for the same value of the time step.
\end{enumerate}

%For acknowledgements section, please don't number the section, please begin it with \section*{Acknowledgements}
\section*{Acknowledgments} We are grateful to the two anonymous referees  for their detailed comments and remarks that helped us to improve
both the content and the presentation of the paper.

We acknowledge the support of the project B4 of
SFB-Transregio 109 ``Discretization in Geometry and Dynamics" and the hospitality of Technische Universit\"at Berlin during a joint visit of
the authors in  April 2015 when this project was started. LGN also benefited from the hospitality at TU Berlin during the months of June and November of 2015, and these visits were very valuable to make progress in this project.

We are  thankful to Yuri Fedorov, Yuri Suris and Claude Viallet  for very useful comments and remarks. Also to Matteo Petrera for 
his guidance with MAPLE and to Ramiro Ch\'avez-Tovar for his help in the edition of the figures in the paper.

LGN acknowledges support of the Program UNAM-DGAPA-PAPIIT IA103815 that served to cover travel expenses and to fund a visit of FJ to UNAM in Mexico in February 2016.

The research of FJ has been partially supported in its late stages by DGIST Research
and Development Program (CPS Global Center) funded by the Ministry of Science,
ICT \& Future Planning, Global Research Laboratory Program (2013K1A1A2A02078326)
through NRF, and Institute for Information \& Communications Technology Promotion (IITP) grant funded by the Korean government (MSIP) (No. B0101-15-0557,
Resilient Cyber-Physical Systems Research).

\section*{Appendix}

\begin{proposition}\label{4orderp}
The momentum locus $\mathfrak{u}^{(1)}_{\varepsilon}\subset \so(3)^*$ defined by  $\ell^{(1,\varepsilon)}_d$ is  contained in
the zero locus of the degree four polynomial $p_4(M_1,M_2,M_3;\varepsilon)$ given by
\begin{equation*}
\begin{split}
p_4(M_1,M_2,M_3;\varepsilon)=&-4(I_{11}-I_{22}) (I_{13}I_{22}M_1+I_{11}I_{23}M_2-I_{11}I_{22}M_3) \\
& \qquad +\sum_{j=2}^4 \varepsilon^{j-1} p_4^{(j)}(M_1,M_2,M_3),
\end{split}
\end{equation*}
where, for $j=2,3,4$, $p_4^{(j)}$ is a homogeneous polynomial of degree $j$. 
\end{proposition}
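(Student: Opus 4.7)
The plan is to produce $p_4$ by eliminating the parameters $(u,v)$ from the cleared-denominator form of the parametrisation \eqref{MParame}, and then to explain its $\varepsilon$-grading by a weighted-homogeneity argument.

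Setting $\Delta:=4+\varepsilon^2(u^2+v^2)$, I would multiply each of \eqref{MParamea}--\eqref{MParamec} by $\Delta$ to obtain three polynomial equations $F_i(u,v;M,\varepsilon)=0$, each of total degree $2$ in $(u,v)$ and degree $1$ in $M$. A direct expansion of the linear combination $I_{13}I_{22}F_1+I_{11}I_{23}F_2-I_{11}I_{22}F_3$ collapses almost completely and yields
\begin{equation*}
\Delta\,L \,=\, 2\varepsilon\,Q(u,v),\qquad L\,:=\,I_{13}I_{22}M_1+I_{11}I_{23}M_2-I_{11}I_{22}M_3,
\end{equation*}
with $Q$ an explicit homogeneous quadratic in $(u,v)$ depending only on the inertia coefficients. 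Since $L=0$ is precisely the defining equation \eqref{E:defdestarSuslov} of $\de^*$, this identity exhibits the tangency of $\mathfrak{u}_\varepsilon^{(1)}$ to $\de^*$ at the origin, and it forces the $\varepsilon=0$ specialisation of any polynomial cutting out the momentum locus to be proportional to $L$.

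To obtain the full polynomial I would eliminate $u$ and $v$ from $\{F_1=F_2=F_3=0\}$ by an iterated resultant computation carried out in MAPLE. Because each $F_i$ has bidegree at most $(2,2)$ in $(u,v)$, a standard multivariate B\'ezout-type bound guarantees that the elimination ideal contains a polynomial of total degree at most $4$ in $(M_1,M_2,M_3)$; this is the polynomial $p_4$ of the statement. The normalisation constant $-4(I_{11}-I_{22})$ in front of $L$ can then be read off directly from the output, and cross-checked against the leading behaviour dictated by $\Delta L = 2\varepsilon Q$.

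The $\varepsilon$-grading claimed for $p_4$ is forced by a weighted-homogeneity count. Assign weights $w(u)=w(v)=w(M_i)=-1$ and $w(\varepsilon)=+1$; a term-by-term check shows that each $F_i$ is weight-homogeneous of weight $-1$, so the scaling $(u,v,M,\varepsilon)\mapsto(t^{-1}u,t^{-1}v,t^{-1}M,t\varepsilon)$ preserves the ideal $(F_1,F_2,F_3)$ up to an overall factor of $t^{-1}$. The eliminant is therefore weight-homogeneous as well, and since a monomial $\varepsilon^{k}M^{\alpha}$ has weight $k-|\alpha|$, the condition that $p_4$ has total weight $-1$ forces $|\alpha|=k+1$, which is exactly the graded form $p_4 = -4(I_{11}-I_{22})L + \sum_{j=2}^{4}\varepsilon^{j-1}p_4^{(j)}$ with $\deg p_4^{(j)}=j$. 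The main obstacle is purely computational: the resultant elimination cannot reasonably be carried out by hand, and the explicit polynomials $p_4^{(2)}$, $p_4^{(3)}$, $p_4^{(4)}$ are long and not particularly illuminating; all of the structural content of the statement (degree bound, precise linear leading term, $\varepsilon$-grading), however, follows from the identity $\Delta L = 2\varepsilon Q(u,v)$ together with the weight count.
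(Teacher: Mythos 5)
Your proposal is correct and, at its computational core, rests on the same foundation as the paper's proof: both ultimately depend on a symbolic computation to produce the explicit polynomial. The paper's own argument is minimal --- it exhibits $p_4$ (found by computer algebra) and verifies the proposition by direct substitution of the parametrisation \eqref{MParame} into $p_4$, checking that the result vanishes identically. Your route replaces the substitution check by an elimination argument (iterated resultants applied to the cleared-denominator equations $F_i=0$, which by construction yields elements of the ideal of the image), and, more valuably, supplies structural explanations that the paper omits: the identity $\Delta L = 2\varepsilon Q(u,v)$, which I have checked and which indeed holds with $Q(u,v)=(I_{13}I_{22}v-I_{11}I_{23}u)(I_{13}u+I_{23}v)-I_{11}I_{22}(I_{22}-I_{11})uv$ (and which re-proves the tangency of $\mathfrak{u}^{(1)}_\varepsilon$ to $\de^*$ at the origin); the a priori degree bound $\le 4$ coming from the fact that the parametrisation is by quadratic forms; and the weighted-homogeneity count with $w(u)=w(v)=w(M_i)=-1$, $w(\varepsilon)=1$, under which each $F_i$ is indeed homogeneous of weight $-1$, forcing the grading $\deg p_4^{(j)}=j$ on the $\varepsilon^{j-1}$ coefficient. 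The one soft spot is your assertion that the $\varepsilon=0$ specialisation of the eliminant must be \emph{proportional} to $L$: the arguments you give only force it to be \emph{divisible} by $L$ (since the $\varepsilon=0$ image is exactly the plane $L=0$), so that the overall weight, and hence the exact grading, is pinned down only once you read off from the computer output that this specialisation is linear; since you explicitly defer the constant $-4(I_{11}-I_{22})$ to the output anyway, this is a presentational gap rather than a logical one. Net effect: your proof buys a conceptual explanation of \emph{why} $p_4$ has the stated degree and $\varepsilon$-structure, where the paper's buys only a (complete but unilluminating) verification that it does.
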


The proof of this proposition  follows by a direct substitution of \eqref{MParame} into $p_4$, whose expression  was obtained with the help of a symbolic mathematical software. The polynomials $p_4^{(j)}$ are
\begin{small}
\[
\begin{split}
p_4^{(2)}(X,Y,Z)& =
 -\frac{2I_{11}I_{22}}{I_{23}I_{13}}(I_{13}^2+I_{23}^2)Z^2+ \frac{2I_{22}I_{13}}{I_{23}}(I_{11}-I_{22})X^2+4(I_{11}-I_{22})^2XY\\
&+\frac{2}{I_{13}}(I_{11}^2I_{22}+2I_{11}I_{13}^2-I_{11}I_{22}^2+I_{11}I_{23}^2-I_{13}^2I_{22})YZ-\frac{I_{11}I_{23}}{I_{13}}(I_{11}-I_{22})Y^2\\
& - \frac{2}{I_{23}}(I_{11}^2I_{22}-I_{11}I_{22}^2+I_{11}I_{23}^2-I_{13}^2I_{22}-2I_{22}I_{23}^2)XZ ,
\end{split}
\]
\[
\begin{split}
p_4^{(4)}(X,Y,Z)&=\frac{I_{22}}{I_{23}}XY^2Z-XYZ^2-\frac{1}{2I_{23}I_{13}}(I_{13}^2+I_{23}^2)Z^4-X^3Y-XY^3-\frac{1}{2}X^4\\
&+\frac{I_{11}}{I_{13}}YZ^3+\frac{I_{11}}{I_{13}}Y^3Z+\frac{I_{11}}{I_{13}}X^2YZ+\frac{I_{22}}{I_{23}}XZ^3+\frac{I_{22}}{I_{23}}X^3Z-\frac{I_{23}}{2I_{13}}Y^4 \\
&-\frac{1}{2I_{13}I_{23}}(I_{11}^2+I_{13}^2+2I_{23}^2)Y^2Z^2-\frac{1}{2I_{23}I_{13}}(2I_{13}^2+I_{22}^2+I_{23}^2)X^2Z^2\\
&-\frac{1}{2I_{13}I_{23}}(I_{11}^2-2I_{11}I_{22}+I_{13}^2+I_{22}^2+I_{23}^2)X^2Y^2.
\end{split}
\]
and
\[
\begin{split}
p_4^{(3)}(X,Y,Z)&= I_{13}X^3-I_{23}Y^3+(4I_{11}-I_{22})Y^2Z-\frac{1}{I_{23}}(I_{11}^2+I_{11}I_{22}+I_{13}^2+I_{23}^2)YZ^2\\
& -\frac{1}{I_{23}}(I_{11}^2-3I_{11}I_{22}+I_{13}^2+2I_{22}^2-2I_{23}^2)X^2Y+(I_{11}-4I_{22})X^2Z \\
& +(I_{11}I_{22}+I_{13}^2+I_{22}^2+I_{23}^2)XZ^2+(I_{11}-I_{22})Z^3 \\ & -\frac{1}{I_{13}I_{23}}\left( I_{11}^2I_{22}
 -I_{11}^2I_{22}+3I_{11}I_{23}^2-3I_{13}^2I_{22}\right)XYZ\\ &+\frac{1}{I_{13}}\left(2I_{11}^2-3I_{11}I_{22}-2I_{13}^2+I_{22}^2+I_{23}^2\right)XY^2,
\end{split}
\]

\end{small}

\begin{proposition}\label{Propoq7}
The momentum locus $\mathfrak{u}_{\varepsilon}^{(\infty)}\subset \so(3)^*$ defined by  $\ell^{(\infty,\varepsilon)}_d$ is contained in
the zero locus of the degree seven polynomial $q_7(M_1,M_2,M_3; \varepsilon)$ given by
\begin{equation*}
\begin{split}
q_7(M_1,M_2,M_3;\varepsilon)=&-4 (I_{13}I_{22}M_1+I_{11}I_{23}M_2-I_{11}I_{22}M_3)q_7^{(4)}(M_1,M_2,M_3) \\
& \qquad +\varepsilon q_7^{(6)}(M_1,M_2,M_3)+\varepsilon^2 q_7^{(7)}(M_1,M_2,M_3),
\end{split}
\end{equation*}
where  for $j=4,6,7$,  $q_7^{(j)}$ denotes a homogeneous polynomial of degree $j$. 
\end{proposition}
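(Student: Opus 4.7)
The plan is to produce $q_7$ by eliminating $(u,v)$ from the parametrisation \eqref{MParameInfty}, in direct analogy with the strategy behind Proposition \ref{4orderp}. Viewing \eqref{MParameInftya}--\eqref{MParameInftyc} as three polynomial equations
\begin{equation*}
F_i(u,v;M_1,M_2,M_3,\varepsilon)=0,\qquad i=1,2,3,
\end{equation*}
of total degrees $3$, $3$, $2$ in $(u,v)$, the momentum locus $\mathfrak{u}_\varepsilon^{(\infty)}$ is the image of the map $(u,v)\mapsto (M_1,M_2,M_3)$ they define, so any element of the elimination ideal $\langle F_1,F_2,F_3\rangle \cap \R[M_1,M_2,M_3,\varepsilon]$ vanishes identically on $\mathfrak{u}_\varepsilon^{(\infty)}$. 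The first step is therefore to compute a generator of this elimination ideal, which is done by resultants or Gr\"obner bases with a symbolic algebra system (as the authors do elsewhere with MAPLE): for instance one can use $F_3$, which is quadratic in $(u,v)$, to reduce $F_1$ and $F_2$, and then take iterated Sylvester resultants in $u$ and in $v$.

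Once a polynomial $q\in \R[M_1,M_2,M_3,\varepsilon]$ in this elimination ideal is in hand, the second step is to identify its structure. Assign quasi-homogeneous weights $\deg u=\deg v=1$, $\deg M_i=1$, $\deg\varepsilon=-1$; inspection of \eqref{MParameInfty} shows that each $F_i$ is quasi-homogeneous of weight one. Consequently, the elimination ideal is generated by quasi-homogeneous elements, and any such element admits an expansion
\begin{equation*}
q(M_1,M_2,M_3,\varepsilon) = \sum_{k\ge 0}\varepsilon^k\, r_{w+k}(M_1,M_2,M_3),
\end{equation*}
with $r_{w+k}$ homogeneous of degree $w+k$ in $(M_1,M_2,M_3)$. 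Selecting the overall weight $w=5$ forces the $\varepsilon^0$, $\varepsilon^1$, $\varepsilon^2$ components to have degrees $5$, $6$, $7$ respectively, matching the shape claimed in the statement.

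The third step pins down the linear factor appearing in $r_5$. Setting $\varepsilon=0$ in \eqref{MParameInfty} recovers $M=\I\omega$ with $\omega\in\mathfrak{d}$, whose image is precisely the plane $\mathfrak{d}^*=\{L=0\}$ given by \eqref{E:defdestarSuslov}, where $L(M):=I_{13}I_{22}M_1+I_{11}I_{23}M_2-I_{11}I_{22}M_3$. Hence $L$ must divide $r_5$, and the complementary degree-$4$ cofactor is the promised $q_7^{(4)}$. The numerical constant $-4$ and the polynomials $q_7^{(6)}$ and $q_7^{(7)}$ are then fixed by a standard normalisation and read off from the computed output.

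The main obstacle is computational rather than conceptual: the symbolic elimination involves seven parameters ($I_{11},I_{22},I_{13},I_{23},M_1,M_2,M_3$) together with $\varepsilon$, and the intermediate resultants are sizeable. What one must verify from the output — and this is the only point beyond the \emph{a priori} quasi-homogeneous bookkeeping — is that the elimination ideal actually admits an element of total weight exactly $5$, whose $\varepsilon=0$ specialisation has $L$ as a factor; once this is confirmed, the decomposition of $q_7$ as $-4 L\, q_7^{(4)} + \varepsilon\, q_7^{(6)} + \varepsilon^2\, q_7^{(7)}$ follows exactly as in the proof of Proposition \ref{4orderp}.
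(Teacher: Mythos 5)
Your proposal is correct and is essentially the route the paper takes: the explicit $q_7$ was produced by symbolic elimination of $(u,v)$ from the parametrisation \eqref{MParameInfty}, and the proof as stated in the paper is precisely the final verification step you describe, namely direct substitution of \eqref{MParameInfty} into $q_7$ checked by computer algebra. Your quasi-homogeneous weight count and the identification of the linear factor $I_{13}I_{22}M_1+I_{11}I_{23}M_2-I_{11}I_{22}M_3$ from the $\varepsilon=0$ limit (where the image collapses onto $\de^*$) are a nice a priori explanation of the shape of $q_7$, but they do not change the substance of the argument.
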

Similar to  Proposition \ref{4orderp}, this result follows by a direct substitution of \eqref{MParameInfty} into $q_7$, and was obtained with the
help of a symbolic mathematical software. The polynomials $q_7^{(j)}$ are

\begin{small}
\[
\begin{split}
q_7^{(7)}(X,Y,Z)&=Z^3\left(I_{11}^2I_{22}X^2+2I_{11}I_{13}I_{22}XZ+I_{11}I_{22}^2Y^2+2I_{11}I_{22}I_{23}YZ \right .\\
& \left . +(I_{11}I_{23}^2-I_{13}^2I_{22})Z^2\right)^2.
\end{split}
\]

\[
\begin{split}
&q_7^{(4)}(X,Y,Z)= I_{13}^2I_{22}^4X^4+2I_{11}I_{13}I_{22}^3I_{23}X^3Y  +(2I_{11}^2I_{13}I_{22}^3-2I_{11}I_{13}I_{22}^4+2I_{13}^3I_{22}^2)X^3Z\\ 
 &+ \left(I_{11}^4I_{22}^2-2I_{11}^3I_{22}^3+I_{11}^2I_{13}^2 I_{22}^2+I_{11}^2 I_{22}^4 +I_{11}^2 I_{22}^2I_{23}^2\right)X^2Y^2+6I_{11} I_{13}^2I_{22}^2I_{23}X^2YZ  \\ 
 &  +\left(I_{11}^4I_{22}^2-2I_{11}^3I_{22}^3+2I_{11}^2I_{13}^2I_{22}^2+I_{11}^2I_{22}^4 +I_{11}^2I_{22}^2I_{23}^2-2I_{11}I_{13}^2I_{22}^3+I_{13}^4I_{22}^2  \right .  \\
&   \left . +I_{13}^4I_{22}^2  +I_{13}^2I_{22}^4+I_{13}^2I_{22}^2I_{23}^2\right)X^2Z^2      +2I_{11}^3I_{13}I_{22}I_{23}XY^3 +6I_{11}^2I_{13}I_{22}I_{23}^2XY^2Z       \\
&  +\left( 2I_{11}^3I_{13}I_{22}I_{23}-4I_{11}^2I_{13}I_{22}^2I_{23}+2I_{11}I_{13}^3I_{22}I_{23}+2I_{11}I_{13}I_{22}^3I_{23}+2I_{11}I_{13}I_{22}I_{23}^3\right)XYZ^2 \\ 
&+\left(2I_{11}^2I_{13}I_{22}^3+42I_{11}^2I_{13}I_{22}I_{23}^3 -2I_{11}I_{13}I_{22}^4-2I_{11}I_{13}I_{22}^2I_{23}^2+I_{13}^3I_{22}^3\right)XZ^3 \\
&+I_{11}^4I_{23}^2Y^4+(-2I_{11}^4I_{22}I_{23}+2I_{11}^3I_{22}^2I_{23}+2I_{11}^3I_{23}^3)Y^3Z +(I_{11}^4I_{22}^2+I_{11}^4I_{23}^2-2I_{11}^3I_{22}^3\\
&-2I_{11}^3I_{22}I_{23}^2+I_{11}^2I_{13}^2I_{22}^2+ I_{11}^2I_{13}^2I_{23}^2+I_{11}^2I_{22}^4+2I_{11}^2I_{22}^2I_{23}^2+I_{11}^2I_{23}^2)Y^2Z^2 \\
& +(2I_{11}^3I_{22}^2I_{23}-2I_{11}^4I_{22}I_{23}+2I_{11}^3I_{23}^3 -2I_{11}^2I_{13}^2I_{22}I_{23}+4I_{11}I_{13}^2I_{22}^2I_{23})YZ^3+(I_{11}^4I_{22}^2 \\
& -2I_{11}^3I_{22}^3-2I_{11}^3I_{22}I_{23}^2 +2I_{11}^2I_{13}^2I_{22}^2 +   I_{11}^2I_{22}^4+2I_{11}^2I_{22}^2I_{23}^2+I_{11}^2I_{23}^4-2I_{11}I_{13}^2I_{22}^3+ \\
& +2I_{11}I_{13}^2I_{22}I_{23}^2+I_{13}^4I_{22}^2)Z^4.
 \end{split}
\]

\[
\begin{split}
&q_7^{(6)}(X,Y,Z)= \left(6(I_{11}^3I_{13}I_{22}^3-I_{11}^2I_{13}I_{22}^4)X^4YZ-6I_{11}^2I_{13}I_{22}^3I_{23}X^4Z^2 \right.\\ &+2(I_{11}^5I_{22}^2-3I_{11}^4I_{22}^3
 +3I_{11}^3I_{22}^4 -I_{11}^2I_{22}^5)X^3Y^3+6(I_{11}^3I_{22}^3I_{23}-I_{11}^2I_{22}^4I_{23})X^3Y^2Z \\
 &+ 2(I_{11}^5I_{22}^2-4I_{11}^4I_{22}^3  +I_{11}^3I_{13}^2I_{22}^2+I_{11}^3I_{22}^4+4I_{11}^2I_{13}^2I_{22}^3-3I_{11}^2I_{22}^3I_{23}^2\\
 &-2I_{11}I_{13}^2I_{22}^4)X^3YZ^2-2(I_{11}^4I_{22}^2I_{23}-3I_{11}^3I_{22}^3I_{23} +5I_{11}^2I_{13}^2I_{22}^2I_{23}+I_{11}^2I_{22}^2I_{23}^3 \\
& +2I_{11}I_{13}^2I_{22}^3I_{23})X^3Z^3 +6(I_{11}^4I_{13}I_{22}^2-I_{11}^3I_{13}I_{22}^3)X^2Y^3Z\\
& +6(I_{11}^3I_{13}I_{22}^2I_{23}-I_{11}^2I_{13}I_{22}^3I_{23})X^2Y^2Z^2+2(I_{13}^3I_{22}^4+3I_{11}^4I_{13}I_{22}^2-3I_{11}^3I_{13}I_{22}^3\\
& -3I_{11}^3I_{13}I_{22}I_{23}^2+3I_{11}^2I_{13}^3I_{22}^2-2I_{11}^2I_{13}I_{22}^4-8I_{11}^2I_{13}I_{22}^2I_{23}^2+3I_{11}I_{13} 3I_{22}^3)X^2YZ^3\\
&+ 2(I_{13}^3I_{22}^3I_{23}+6I_{11}^3I_{13}I_{22}^2I_{23}-2I_{11}^2I_{13}I_{22}^3I_{23}-5I_{11}^2I_{13}I_{22}I_{23}^3-6I_{11}I_{13}^3I_{22}^2I_{23})X^2Z^4\\
&  +6(I_{11}^4I_{22}^2I_{23}-I_{11}^3I_{22}^3I_{23})XY^4Z-2(I_{11}^2I_{22}^5+I_{11}^2I_{22}^2I_{23}^2-2I_{11}^4I_{22}I_{23}^2+3I_{11}^4I_{22}^3\\
&  -3I_{11}^3I_{13}^2I_{22}^2-4I_{11}^3I_{22}^4+4I_{11}^3I_{22}^2I_{23}^2)XY^3Z^2 -2(I_{11}^4I_{23}^3-2I_{11}^4I_{22}^2I_{23}-3I_{11}^3I_{22}^3I_{23}\\
&  +3I_{11}^3I_{22}I_{23}^3-8I_{11}^2I_{13}^2I_{22}^2I_{23}+3I_{11}^2I_{22}^4I_{23}+3I_{11}^2I_{22}^2I_{23}^3-3I_{11}I_{13}^2I_{22}^3I_{23})XY^2Z^3\\
& +2(I_{11}I_{13}^2I_{22}^2I_{23} 2-4I_{11}^4I_{22}^3+5I_{11}^4I_{22}I_{23}^2+3I_{11}^3I_{13}^2I_{23}+4I_{11}^3I_{22}^4+2I_{11}^3I_{22}^2I_{23}^2\\
& -2I_{11}^3I_{22}^4-2I_{11}^2I_{13}^2I_{23}^3-I_{11}^2I_{13}^2I_{22}I_{23}^2-3I_{11}^2I_{22}^3I_{23}^2-3I_{11}^2I_{22}I_{23}^4+3I_{11}I_{13}^4I_{22}^2\\
&  -5I_{11}I_{13}^2I_{22}^4+2I_{13}^4I_{22}^3)XYZ^4-2(I_{11}^2I_{22}^2I_{23}^3+I_{11}^2I_{23}^5+I_{13}^4I_{22}^2I_{23}+2I_{11}I_{13} 2I_{22}I_{23}^3\\
& +5I_{11}I_{13}^2I_{22}^3I_{23}-7I_{11}^2I_{13}^2I_{22}I_{23}+4I_{11}^4I_{22}^2I_{23}-4I_{11}^3I_{22}^3I_{23}-3I_{11}^3I_{22}I_{23}^3)XZ^5\\
& +6I_{11}^3I_{13}I_{22}^2I_{23}Y^4Z^2+2(I_{11}^2I_{13^3}I_{23}^2+I_{11}^2I_{13}I_{22}^4-3I_{11}^3I_{13}I_{22}^3+2I_{11}^3I_{13}I_{22}I_{23} 2\\
&+5I_{11}^2I_{13}I_{22}^2I_{23}^2)Y^3Z^3-2(I_{11}^3I_{13}I_{23}^3+6I_{11}^2I_{13}I_{22}^3I_{23} -2I_{11}^3I_{13}I_{22}^2I_{23}-6I_{11}^2I_{13}I_{22}I_{23}^3\\
&-5I_{11}I_{13}^3I_{22}^2I_{23})Y^2Z^4+2(I_{11}^2I_{13}^3I_{22}^2+I_{11}^2I_{13}I_{23}^4+I_{13}^5I_{22}^2+2I_{11}I_{13}^3I_{22}I_{23}^2\\
&-3I_{11}I_{13}^3I_{22}^3+4I_{11}^2I_{13}I_{22}^4-7I_{11}^2I_{13}I_{22}^2I_{23}^2-4I_{11}^3I_{13}I_{22}^3+5I_{11}^3I_{13}I_{22}I_{23}^2)YZ^5\\
& \left . 8I_{11}I_{13}I_{22}I_{23}(I_{11}I_{22}^2-I_{11}^2I_{22})Z^6\right)
\end{split}
\]

\end{small}


\begin{thebibliography}{99}





\bibitem{Arn} % (MR2269239) 
     \newblock V. I. Arnold, V. V. Kozlov and A. I. Neishtadt,
     \newblock \emph{Mathematical Aspects of Classical and Celestial Mechanics; Dynamical Systems III},
     \newblock 3$^{rd}$ edition, Springer-Verlag, New York, (1994).

\bibitem{Bl}  %(MR3410231) 
     \newblock A. M. Bloch,
     \newblock \emph{Nonholonomic Mechanics and Control},
     \newblock 2$^{nd}$ edition, Springer-Verlag, New York, (2003).



\bibitem{BoSu}% (MR1720528) [10.1023/A:1007654605901]
    \newblock  A. I. Bobenko  and Y. B. Suris, 
    \newblock Discrete Lagrangian reduction, discrete Euler-Poincar\'e equations and semidirect products,
    \newblock \emph{Lett. Math. Phys.}, \textbf{49} (1999), 79--83.





\bibitem{CoMa} % (MR1862825) [10.1088/0951-7715/14/5/322]
    \newblock  J. Cort\'es and S. Mart\'inez,
    \newblock Nonholonomic integrators,
    \newblock \emph{Nonlinearity}, \textbf{14} (2001),  1365--1392.



\bibitem{FeKoz} %(MR1351035) 
    \newblock   Y. N. Fedorov  and V. V. Kozlov,
    \newblock Various aspects of $n-$dimensional rigid body dynamics,
    \newblock  \emph{Amer. Math. Soc. Transl.}, \textbf{168} (1995),  141--171.







\bibitem{FeZen} %(MR2164739) [10.1088/0951-7715/18/5/017]
    \newblock  Y. N. Fedorov  and  D. V. Zenkov,  
    \newblock Discrete nonholonomic {LL} systems on {L}ie groups,
    \newblock \emph{Nonlinearity}, \textbf{18} (2005),   2211--2241.




\bibitem{FeChap}%(MR2299845)   [10.3842/SIGMA.2007.044]
    \newblock   Y. N. Fedorov,
        \newblock A discretization of the nonholonomic Chaplygin sphere problem,
    \newblock \emph{SIGMA}, \textbf{3} (2007),   15pp.


\bibitem{FeMaPr} %(MR2534301) [10.1088/0951-7715/22/9/009]
    \newblock   Y. N. Fedorov, A. J. Maciejewski  and M. Przybylska,
        \newblock The Poisson equations in the nonholonomic Suslov problem: integrability, meromorphic
and hypergeometric solutions,
    \newblock \emph{Nonlinearity}, \textbf{22} (2009),   2231--2259.

\bibitem{GNetal}  %(MR3363400) [10.1016/j.physleta.2014.06.026]
    \newblock   L. C. Garc\'ia-Naranjo, J. C. Marrero, A. J. Maciejewski  and M. Przybylska,
        \newblock The inhomogeneous Suslov problem,
    \newblock \emph{Phys. Lett. A}, \textbf{378} (2014),   2389--2394.


\bibitem{Iglesias} %(MR2411379)  [10.1007/s00332-007-9012-8]
\newblock D. Iglesias, J. C. Marrero, D. Mart\'in de Diego and E. Mart\'inez,
 \newblock Discrete nonholonomic Lagrangian systems on Lie groupoids,
 \emph{J. Nonlinear Sci.}, \textbf{18} (2008),   351-397.
 


\bibitem{Iserles} %(MR1883629)
    \newblock A. Iserles, H. Z. Munthe-Kaas, S. P. Norsett  and A. Zanna,
        \newblock Lie-group methods,
    \newblock \emph{Acta Numerica}, \textbf{9} (2000),   215--365.


\bibitem{JiSch} %(MR3356586) [10.3934/jgm.2015.7.43]
    \newblock F. Jim\'enez F and J. Scheurle,
        \newblock On the discretization of nonholonomic mechanics in $\R^n$,
    \newblock \emph{J.  Geom. Mech.}, \textbf{7} (2015),   43--80.



\bibitem{JiSch2}
\newblock F. Jim\'enez F and J. Scheurle,
 \newblock On the discretization of the Euler-Poincar\'e-Suslov equations in $SO(3)$,
    \newblock arXiv  506.01289. To appear in \emph{J.  Geom. Mech.}




\bibitem{MaRa}  %(MR1723696)
\newblock J. E. Marsden and T. S. Ratiu,
 \newblock \emph{Introduction to Mechanics and Symmetry. A Basic Exposition of Classical Mechanical Systems},
    \newblock   2$^{nd}$ edition, Texts in Applied Mathematics, \textbf{17}. Springer-Verlag, New York, 1999.





\bibitem{MarsdenWest} %(MR2009697)  [10.1017/S096249290100006X]
    \newblock J. E. Marsden  and  M. West,
        \newblock Discrete mechanics and variational integrators,
    \newblock \emph{Acta Numerica}, \textbf{10} (2001),    357--514.


\bibitem{McPer} %(MR2254707) [10.1007/s00332-005-0698-1]
    \newblock R. McLachlan  and M. Perlmutter,
        \newblock Integrators for nonholonomic mechanical systems,
    \newblock \emph{J. Nonlinear Sci.}, \textbf{16} (2006),    283--328.

\bibitem{Mose} %(MR1120138)  [10.1007/BF02352494]
    \newblock J. Moser  and A. P. Veselov,
        \newblock Discrete versions of some classical
integrable systems and factorization of matrix polynomials,
    \newblock \emph{Comm. Math. Phys.}, \textbf{139} (1991),    217--243.

\bibitem{Suslov}
\newblock G. K. Suslov, \emph{Theoretical Mechanics}, Gostekhizdat, Moscow, 1946 (in Russian).


\bibitem{Vese} %(MR0947601)  [10.1007/bf01077598]
    \newblock  A. P. Veselov,
        \newblock Integrable discrete-time systems and difference operators,
    \newblock \emph{Funct. Anal. Appl.}, \textbf{22} (1988),   1--13.



\end{thebibliography}
\end{document}